\theoremstyle{definition}
\newtheorem{Def}{Definition}[section]
\theoremstyle{remark}
\newtheorem{obs}[Def]{Remark}
\theoremstyle{plain}
\newtheorem{prop}[Def]{Proposition}
\newtheorem{lema}[Def]{Lemma}
\newtheorem{cor}[Def]{Corollary}
\newtheorem{teo}[Def]{Theorem}
\newcommand{\bo}{\mathbf}
\newcommand{\A}{{\mathcal A}}
\newcommand{\B}{{\mathcal B}}
\newcommand{\C}{{\mathcal C}}
\newcommand{\D}{{\mathcal D}}
\newcommand{\E}{{\mathcal E}}
\newcommand{\K}{{\mathcal K}}
\renewcommand{\L}{{\mathcal L}}
\newcommand{\V}{{\mathcal V}}
\newcommand{\mt}{\mathscr}
\newcommand{\tx}{\textnormal}
\newcommand{\colim}{\operatornamewithlimits{colim}}
\newcommand{\changeoperator}[1]{%
	\csletcs{#1@saved}{#1@}%
	\csdef{#1@}{\changed@operator{#1}}%
}
\newcommand{\changed@operator}[1]{%
	\mathop{%
		\mathchoice{\textstyle\csuse{#1@saved}}
		{\csuse{#1@saved}}
		{\csuse{#1@saved}}
		{\csuse{#1@saved}}%
	}%
}
\def\@tocline#1#2#3#4#5#6#7{\relax
	\ifnum #1>\c@tocdepth 
	\else
	\par \addpenalty\@secpenalty\addvspace{#2}%
	\begingroup \hyphenpenalty\@M
	\@ifempty{#4}{%
		\@tempdima\csname r@tocindent\number#1\endcsname\relax
	}{%
		\@tempdima#4\relax
	}%
	\parindent\z@ \leftskip#3\relax \advance\leftskip\@tempdima\relax
	\rightskip\@pnumwidth plus4em \parfillskip-\@pnumwidth
	#5\leavevmode\hskip-\@tempdima
	\ifcase #1
	\or\or \hskip 1em \or \hskip 2em \else \hskip 3em \fi%
	#6\nobreak\relax
	\hfill\hbox to\@pnumwidth{\@tocpagenum{#7}}\par
	\nobreak
	\endgroup
	\fi}
\title{On continuity of accessible functors}
\author{Giacomo Tendas}
\address{School of Mathematical and Physical Sciences, Macquarie University NSW 2109, Australia}
\email{giacomo.tendas@mq.edu.au}
\date{\today}
\thanks{}
\begin{document}
	
\begin{abstract}
	We prove that for each locally $\alpha$-presentable category $\K$ there exists a regular cardinal $\gamma$ such that any $\alpha$-accessible functor out of $\K$ (into another locally $\alpha$-presentable category) is continuous if and only if it preserves $\gamma$-small limits; as a consequence we obtain a new adjoint functor theorem specific to the $\alpha$-accessible functors out of $\K$. Afterwards we generalize these results to the enriched setting and deduce, among other things, that a small $\V$-category is accessible if and only if it is Cauchy complete.

\end{abstract}	
	
\maketitle

{
	\small
	\noindent {\bf Keywords:} locally presentable categories; enriched categories; continuous functors; adjoint functor theorems.

	\noindent {\bf Mathematics Subject Classification:} 18C35, 18D20, 18A35, 18A40.
	
	\noindent {\bf Competing interests:} The author declares none.
}
\section{Introduction}

The theory of locally presentable categories has been thoroughly studied and developed since it was first introduced in \cite{GU71:libro}. There are many ways in which locally presentable categories can be described (see for instance \cite{AR94:libro}); most directly one can say that, given a regular cardinal $\alpha$, a category $\K$ is locally $\alpha$-presentable if it is cocomplete and freely generated by a small category under $\alpha$-filtered colimits; a category that satisfies only the latter condition is instead called $\alpha$-accessible. It is a standard result that the locally $\alpha$-presentable categories can alternatively be characterized as those $\alpha$-accessible categories that are also {\em complete}. Because of this variety of characterizations, different notions of morphisms between them have been considered in the literature. Here we define a morphism between locally $\alpha$-presentable categories to consist of a functor which is continuous (preserves all small limits) and $\alpha$-accessible (preserves $\alpha$-filtered colimits); these, together with the natural transformations, identifies a 2-category that we call $\bo{Lp}_\alpha$. Alternatively one could define a morphism between locally $\alpha$-presentable categories to be a cocontinuous functor which preserves the $\alpha$-presentable objects; this describes a 2-category biequivalent to $\bo{Lp}_\alpha^{op}$.

The aim of this paper is to give a characterization of the morphisms out of a locally $\alpha$-presentable category $\K\in\bo{Lp}_\alpha$ in terms of those that preserve $\gamma$-small limits, for some determined $\gamma$. More specifically we prove that for any locally $\alpha$-presentable category $\K$ there exists a regular cardinal $\gamma$ such that an $\alpha$-accessible $F\colon \K\to\L$, with $\L$ locally $\alpha$-presentable, is continuous if and only if it preserves all $\gamma$-small limits (Theorem~\ref{gamm-cont}). The choice of $\gamma$ depends entirely on the category $\K_\alpha$ (see Remark~\ref{optimal}).

This result can also be interpreted as a new adjoint functor theorem for $\alpha$-accessible functors out of a locally $\alpha$-presentable category. The ``ur-adjoint functor theorem'' says that if a category $\K$ has all (possibly large) limits and $U\colon\K\to\L$ preserves them, then $U$ has a left adjoint. When $\K$ only has small limits (as usually happens), then one invokes Freyd's general adjoint functor theorem, which requires $U$ to be continuous and to satisfy in addition the solution set condition; this is the case in particular when $\K$ and $\L$ are locally presentable, and $U$ is a continuous and accessible functor. Then our Theorem~\ref{adjoint} says that the condition on continuity can be weakened to $\gamma$-continuity (for some $\gamma$) when we restrict to the locally $\alpha$-presentable case.

In Section~\ref{Set} we introduce the necessary background notions, prove the main result, and then give a few applications including the new adjoint theorem mentioned above. Then, in Section~\ref{V} we prove an enriched version of the main result based on the notion of locally presentable $\V$-category introduced in \cite{Kel82:articolo}. We obtain again an adjoint functor theorem specialized to the $\alpha$-accessible case (Theorem~\ref{V-adjoint}), and moreover we prove that a small $\V$-category is accessible (in the sense of \cite{BQR98}) if and only if it is Cauchy complete (Theorem~\ref{smallacc}).

\subsection*{Acknowledgements}

I would like to thank my supervisor Steve Lack for suggesting I should write up this paper, for his many helpful comments in the making of, and for going through a preliminary version of it. Thanks also to my associate supervisor Richard Garner for pointing out to me the connection with Boolean algebras in~\ref{boolean}. Finally, let me acknowledge with gratitude the support of an International Macquarie University Research Excellence Scholarship.

\section{The $\bo{Set}$ case}\label{Set}

\subsection{Background}

We assume the reader to be familiar with the notions of locally presentable categories and accessible functors between them, standard references for this are \cite{MP89:libro} and \cite{AR94:libro}.

\begin{Def}
	Let $\alpha$ be a regular cardinal; we say that a functor $M\colon\C^{op}\to\bo{Set}$ is $\alpha$-flat if its left Kan extension $\tx{Lan}_YM\colon[\C,\bo{Set}]\to\bo{Set}$ along the Yoneda embedding is $\alpha$-continuous. We say that $M$ is Cauchy if $\tx{Lan}_YM$ is continuous.
\end{Def}

We are calling a functor continuous if it preserves all small limits, and $\alpha$-continuous if it preserves all the $\alpha$-small ones. Then one can say equivalently that $M$ is Cauchy if and only if it is $\alpha$-flat for every $\alpha$ (indeed $\tx{Lan}_YM$ is continuous if and only if it preserves $\alpha$-small limits for each regular cardinal $\alpha$). 

In the result below we consider the category of elements $\tx{El}(M)$ associated to a functor $M\colon \C^{op}\to\bo{Set}$; this comes together with a projection $\pi\colon \tx{El}(M)\to\C$ (note that in the literature there are two dual notions of categories of elements, the projection $\pi$ univocally identifies the one we consider).

The following is a standard characterization of flat functors:

\begin{prop}[\cite{AR94:libro}]\label{flat-char}
	For a functor $M\colon\C^{op}\to\bo{Set}$ the following are equivalent:\begin{enumerate}\setlength\itemsep{0.25em}
		\item $M$ is $\alpha$-flat;
		\item $\tx{Lan}_YM\colon[\C,\bo{Set}]\to\bo{Set}$ preserves all $\alpha$-small colimits of representables;
		\item $\tx{El}(M)$ is $\alpha$-filtered;
		\item $M$ is an $\alpha$-filtered colimit of representables.
	\end{enumerate}
	If $\C$ is $\alpha$-cocomplete, these are further equivalent to:\begin{enumerate}
		\item[(5)] $M$ is $\alpha$-continuous. 
	\end{enumerate}
\end{prop}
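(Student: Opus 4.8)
The plan is to build everything on the coend presentation of $L:=\tx{Lan}_Y M$. Since $[\C,\bo{Set}]$ is cocomplete the pointwise left Kan extension exists and is computed by the tensor $L F\cong\int^{c}Mc\times Fc$; in particular $L$ is cocontinuous and satisfies $L\circ Y\cong M$. Rewriting the coend as a colimit over the category of elements gives the two facts I will use repeatedly: $L F\cong\colim_{(c,x)\in\tx{El}(M)}Fc$, and, naturally in $F$, an expression of $L$ itself as the pointwise colimit $L\cong\colim_{(c,x)\in\tx{El}(M)}\tx{ev}_{c}$ of the evaluation functors $\tx{ev}_{c}\colon[\C,\bo{Set}]\to\bo{Set}$, each of which preserves all limits because limits in $[\C,\bo{Set}]$ are computed pointwise. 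I would then prove the main equivalence as the cycle $(1)\Rightarrow(2)\Rightarrow(3)\Rightarrow(4)\Rightarrow(1)$.

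The implication $(1)\Rightarrow(2)$ is immediate, since the relevant diagrams of representables are a special case of all $\alpha$-small diagrams. For $(2)\Rightarrow(3)$ I would set up a dictionary between the $\alpha$-small limits of representables that $L$ preserves and the cocone conditions defining $\alpha$-filteredness of $\tx{El}(M)$: applying $L$ to the empty product $\Delta 1$ yields $\int^{c}Mc\cong\pi_0\tx{El}(M)$, so preserving it forces $\tx{El}(M)$ to be connected and nonempty; applying $L$ to an $\alpha$-small product $\prod_{i}\C(a_i,-)$ of representables and comparing with $\prod_i Ma_i$ translates, via the description of the $\tx{El}(M)$-morphisms $(a_i,x_i)\to(c,x)$ as maps $f\colon a_i\to c$ with $(Mf)x=x_i$, into the existence (and confluence) of upper bounds for every family of fewer than $\alpha$ objects; and the analogous computation with equalizers of representables yields the coequalizing of such families of parallel morphisms. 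As every $\alpha$-small limit is built from $\alpha$-small products and equalizers, these conditions assemble into $\tx{El}(M)$ being $\alpha$-filtered.

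For $(3)\Rightarrow(4)$ I would invoke the canonical (co-Yoneda) presentation $M\cong\colim_{(c,x)\in\tx{El}(M)}\C(-,c)$ of a presheaf as the colimit of representables indexed by its category of elements; if $\tx{El}(M)$ is $\alpha$-filtered this already exhibits $M$ as an $\alpha$-filtered colimit of representables. Finally $(4)\Rightarrow(1)$ uses that $\tx{Lan}_Y(-)$ is cocontinuous in the functor being extended and sends each representable $\C(-,c)$ to the continuous functor $\tx{ev}_{c}$; thus an $\alpha$-filtered presentation $M\cong\colim_i\C(-,c_i)$ gives $L\cong\colim_i\tx{ev}_{c_i}$, an $\alpha$-filtered colimit of continuous (hence $\alpha$-continuous) functors, which is $\alpha$-continuous because $\alpha$-filtered colimits commute with $\alpha$-small limits in $\bo{Set}$; so $M$ is $\alpha$-flat. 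Under the extra hypothesis that $\C$ is $\alpha$-cocomplete I would close the loop with $(5)$: the implication $(1)\Rightarrow(5)$ is formal, since $M\cong L\circ Y$ and the Yoneda embedding $Y\colon\C^{op}\to[\C,\bo{Set}]$ preserves the $\alpha$-small limits now available in $\C^{op}$; conversely, for $(5)\Rightarrow(3)$ one manufactures the required cocones in $\tx{El}(M)$ from $\alpha$-small coproducts and coequalizers in $\C$, whose images under the $\alpha$-continuous $M$ are the corresponding products and equalizers in $\bo{Set}$.

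The main obstacle is the step $(2)\Rightarrow(3)$: turning the abstract preservation statement into the combinatorial cocone conditions requires a careful reading of the comparison maps $L(\lim D)\to\lim L(D)$ for $D$ a diagram of representables, keeping track both of their surjectivity (existence of cocones) and their injectivity (confluence), and checking that products and equalizers genuinely suffice to capture all of $\alpha$-filteredness. The remaining ingredients — the coend formula, the cocontinuity of $\tx{Lan}_Y$, and the commutation of $\alpha$-filtered colimits with $\alpha$-small limits in $\bo{Set}$ — are standard and enter only formally.
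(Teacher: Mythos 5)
The paper gives no proof of this proposition at all --- it is stated as a citation to Ad\'amek--Rosick\'y --- so the comparison is necessarily with the standard literature argument, and your proposal is a correct reconstruction of it: the cycle $(1)\Rightarrow(2)\Rightarrow(3)\Rightarrow(4)\Rightarrow(1)$ built on the presentation $\tx{Lan}_YM\cong\colim_{(c,x)\in\tx{El}(M)}\tx{ev}_c$, together with $(1)\Rightarrow(5)\Rightarrow(3)$ in the $\alpha$-cocomplete case, is exactly how this result is usually proved. Three glosses. First, you implicitly (and correctly) read condition (2) as preservation of $\alpha$-small \emph{limits} of representables; as printed (``colimits'') the condition would be vacuous, since $\tx{Lan}_YM$ is cocontinuous (it is a left adjoint, as your coend formula shows) and so preserves all colimits for \emph{every} $M$ --- the paper's own use of the proposition in the proof of Theorem~\ref{gamm-cont} confirms that limits are intended. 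Second, in the step $(2)\Rightarrow(3)$ bear in mind that $\alpha$-filteredness demands cocones over $\alpha$-small \emph{families} of parallel morphisms, so binary equalizers of representables do not suffice: you must apply (2) to joint (wide) equalizers of fewer than $\alpha$ maps between two representables, which are still limits of $\alpha$-small diagrams of representables and hence covered by (2). Third, the ``main obstacle'' you identify is smaller than you fear: only the \emph{surjectivity} of the comparison maps $L(\lim D)\to\lim L(D)$ is ever used in that step (for the empty product it gives nonemptiness of $\tx{El}(M)$, for products it gives upper bounds for families of objects, for wide equalizers it gives the coequalizing arrow); the injectivity, or ``confluence'', plays no role in deducing $\alpha$-filteredness.
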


This in turn says that $M\colon\C^{op}\to\bo{Set}$ is Cauchy if and only if its category of elements $\tx{El}(M)$ is absolute; meaning that $\tx{El}(M)$-colimits are absolute colimits, and that $M$ itself is an absolute colimit of representables.

The following Lemma appeared in the enriched context as \cite[Lemma~2.7]{LT21:articolo}, we provide a proof (in the ordinary setting) for completeness. 

\begin{lema}\label{flat-restriction}
	Let $J\colon \B\to\C$ and $ M \colon \B^{op}\to\bo{Set}$ be two functors; then:\begin{enumerate}\setlength\itemsep{0.25em}
		\item if $ M $ is $\alpha$-flat then $\tx{Lan}_{J^{op}} M$ is;
		\item if $J$ is fully faithful and $\tx{Lan}_{J^{op}}M $ is $\alpha$-flat then $ M $ is $\alpha$-flat as well.
	\end{enumerate}
	The same holds when replacing $\alpha$-flat by Cauchy.
\end{lema}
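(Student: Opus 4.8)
The plan is to establish the two implications for $\alpha$-flatness by means of the characterizations in Proposition~\ref{flat-char}, and then obtain the Cauchy statements for free, recalling that $M$ is Cauchy precisely when it is $\alpha$-flat for every regular cardinal $\alpha$.

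For (1) I would start from the equivalence $(1)\Leftrightarrow(4)$, writing $M$ as an $\alpha$-filtered colimit of representables $M\cong\colim_{i\in I}\B(-,b_i)$ with $I$ $\alpha$-filtered. The two ingredients I need are that $\tx{Lan}_{J^{op}}$ is a left adjoint (to restriction $(J^{op})^*$ along $J^{op}$), hence cocontinuous, and that it carries representables to representables, $\tx{Lan}_{J^{op}}\B(-,b)\cong\C(-,Jb)$, which is immediate from the defining adjunction and Yoneda. Combining these gives $\tx{Lan}_{J^{op}}M\cong\colim_{i\in I}\C(-,Jb_i)$, again an $\alpha$-filtered colimit of representables, so it is $\alpha$-flat by $(4)\Rightarrow(1)$. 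Note that this direction uses no hypothesis on $J$.

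For (2) I would instead use $(1)\Leftrightarrow(3)$, so that, writing $N:=\tx{Lan}_{J^{op}}M$, the task becomes: if $\tx{El}(N)$ is $\alpha$-filtered then so is $\tx{El}(M)$. Full faithfulness of $J$ makes $\tx{Lan}_{J^{op}}$ fully faithful, so the unit $M\cong(J^{op})^*N=N\circ J^{op}$ is invertible; an element $x\in M(b)$ is then the same thing as an element $x\in N(Jb)$, which yields a functor $\tilde J\colon\tx{El}(M)\to\tx{El}(N)$ sending $(b,x)$ to $(Jb,x)$ and acting as $J$ on morphisms. I would check that $\tilde J$ is fully faithful (directly, from full faithfulness of $J$) and final. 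For finality I would apply the cocontinuous $\tx{Lan}_{J^{op}}$ to the density colimit $M\cong\colim_{(b,x)\in\tx{El}(M)}\B(-,b)$, obtaining the presentation $N\cong\colim_{(b,x)\in\tx{El}(M)}\C(-,Jb)$; for $(c,y)\in\tx{El}(N)$ the comma category $(c,y)/\tilde J$ is then exactly the category of representatives of $y$ in the colimit computing $N(c)$, hence nonempty and connected.

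Finally I would invoke the general fact that a fully faithful final functor $\tilde J\colon A\to D$ with $D$ $\alpha$-filtered forces $A$ to be $\alpha$-filtered: given an $\alpha$-small diagram in $\tx{El}(M)$, its image admits a cocone in the $\alpha$-filtered $\tx{El}(N)$, and choosing (by nonemptiness of the comma category over the cocone vertex) an object of $\tx{El}(M)$ above that vertex, full faithfulness lets me lift the whole cocone back into $\tx{El}(M)$. Thus $\tx{El}(M)$ is $\alpha$-filtered and $M$ is $\alpha$-flat. The Cauchy versions of (1) and (2) transfer verbatim by running the $\alpha$-flat statements at every regular cardinal $\alpha$. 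The main obstacle is the finality/connectedness verification in (2): identifying the comma categories of $\tilde J$ with the fibres of the Kan-extension colimit — and hence recognizing them as connected — is the one place where full faithfulness of $J$ genuinely enters, everything else being formal.
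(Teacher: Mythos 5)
Your proof is correct, but it takes a genuinely different route from the paper's. The paper argues formally, entirely at the level of Kan extensions: for (1) it uses the commuting triangle $\tx{Lan}_YM\cong\tx{Lan}_{Y'}(\tx{Lan}_{J^{op}}M)\circ[J,\bo{Set}]$ together with continuity of $[J,\bo{Set}]$, and for (2) it uses the isomorphism $id_{[\B,\bo{Set}]}\cong[J,\bo{Set}]\circ\tx{Ran}_J$ (available exactly because $J$ is fully faithful) together with continuity of $\tx{Ran}_J$; no elements, no filteredness. Your argument instead goes through Proposition~\ref{flat-char}: part (1) via cocontinuity of $\tx{Lan}_{J^{op}}$ applied to an $\alpha$-filtered colimit of representables, and part (2) via the induced fully faithful functor $\tilde J\colon\tx{El}(M)\to\tx{El}(N)$, whose finality you extract correctly by identifying the comma categories $(c,y)/\tilde J$ with the fibres of the coend presentation of $N(c)$, followed by the cocone-lifting argument. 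This is precisely the alternative the paper itself flags in the remark immediately after its proof (``another way to prove the result above would be by using the category of elements and Proposition~\ref{flat-char}\dots''), citing Example~1.o on page~63 of \cite{AR94:libro} for the two facts about final functors and $\alpha$-filteredness that you prove by hand. The trade-off: your approach is more elementary and self-contained, but it is genuinely $\bo{Set}$-specific, since filteredness of categories of elements has no direct enriched analogue; the paper's formal approach is chosen exactly because it transfers verbatim to the enriched setting (the lemma is the ordinary shadow of \cite[Lemma~2.7]{LT21:articolo}), which matters for Section~\ref{V}.
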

\begin{proof}
	By definition a functor $ M $ is $\alpha$-flat if its left Kan extension along the Yoneda embedding $\tx{Lan}_YM\colon [\B,\bo{Set}]\to\bo{Set}$, is $\alpha$-continuous. 
	Note that the triangle below commutes;
	\begin{center}
		\begin{tikzpicture}[baseline=(current  bounding  box.south), scale=2]
			
			\node (a) at (0.7,0.7) {$[\B,\bo{Set}]$};
			\node (c) at (0, 0) {$[\C,\bo{Set}]$};
			\node (d) at (1.8, 0) {$\bo{Set}$};
			
			\path[font=\scriptsize]
			
			(c) edge [->] node [above] {$[J,\bo{Set}]\ \ \ \ \ \ \ \ \ \ \ \  $} (a)
			(a) edge [->] node [above] {$\ \ \ \ \ \ \ \ \ \ \tx{Lan}_YM$} (d)
			(c) edge [->] node [below] {$\tx{Lan}_{Y'}(\tx{Lan}_{J^{op}}M)$} (d);
			
		\end{tikzpicture}	
	\end{center} 
	where $Y'$ is the Yoneda embedding for $\C^{op}$ (this can easily be seen in terms of weighted colimits). As a consequence if $ M$ is $\alpha$-flat then so is $\tx{Lan}_{J^{op}}M$ since $[J,\bo{Set}]$ is continuous.
	Conversely, if $J$ is fully faithful and $\tx{Lan}_{J^{op}}M $ is $\alpha$-flat, then 
	\begin{equation*}
		\begin{split}
			\tx{Lan}_{Y}M&\cong \tx{Lan}_{Y}M\circ id_{[\B,\bo{Set}]} \\
			&\cong \tx{Lan}_{Y}M\circ [J,\bo{Set}]\circ \tx{Ran}_{J}\\
			&\cong \tx{Lan}_{Y'}(\tx{Lan}_{J^{op}}M) \circ \tx{Ran}_{J}\\
		\end{split}
	\end{equation*}
	where $id_{[\B,\bo{Set}]}\cong[J,\bo{Set}]\circ \tx{Ran}_{J}$ since $J$ is fully faithful. It follows that $\tx{Lan}_{Y}M$ is $\alpha$-continuous because $\tx{Lan}_{Y'}(\tx{Lan}_{J^{op}}M)$ is (by assumption) and $\tx{Ran}_{J}$ is continuous.
	
	The last assertion is now a consequence of what we have already proven since $M$ is Cauchy if and only if it is $\alpha$-flat for every $\alpha$.
	
\end{proof}

Note that another way to prove the result above would be by using the category of elements and Proposition~\ref{flat-char}. In that case it would be enough to observe that $(1)$ if $\D$ is $\alpha$-filtered and $H\colon\D\to\E$ is final then $\E$ is $\alpha$-filtered too, and $(2)$ if $\E$ is $\alpha$-filtered and $H\colon\D\to\E$ is fully faithful and final, then $\D$ is $\alpha$-filtered (see Example 1.o on page 63 of \cite{AR94:libro}). However we prefer to keep a more formal approach.

\subsection{The result}

We start this section by recalling the notion of $\alpha$-small functor:

\begin{Def}\cite[4.1]{Kel82:articolo}
	A functor $M\colon\C^{op}\to\bo{Set}$ is called $\alpha$-small if $\C$ is an (essentially) $\alpha$-small category and $M$ lands in $\bo{Set}_\alpha$.
\end{Def}

Note that, assuming we are given an $\alpha$-small category $\C$, to say that $M\colon\C^{op}\to\bo{Set}$ is $\alpha$-small is the same as saying that it is $\alpha$-presentable as an object of $[\C^{op},\bo{Set}]$.

The result below first appeared (with a different choice of $\gamma$) in the proof of \cite[Theorem~2.2.2]{MP89:libro}.

\begin{lema}\label{gamma-flat-small}
	Let $\C$ be such that $\C(B,C)$ is $\beta$-small for any $B$ and $C$, and let $\gamma>\beta$. Then any $\gamma$-flat functor $M\colon\C^{op}\to\bo{Set}$ lands in $\bo{Set}_\beta$. If moreover $\C$ has less than $\gamma$ objects, then $M$ is $\gamma$-small.
\end{lema}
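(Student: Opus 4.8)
The plan is to run everything through the characterization of flatness in Proposition~\ref{flat-char}. Since $M$ is $\gamma$-flat, its category of elements $\tx{El}(M)$ is $\gamma$-filtered, and the canonical density presentation exhibits $M$ as a $\gamma$-filtered colimit of representables,
\[
M\;\cong\;\colim_{(C,x)\in \tx{El}(M)}\C(-,C).
\]
Because colimits in $[\C^{op},\bo{Set}]$ are computed pointwise, evaluating at any object $D$ yields $M(D)\cong\colim_{(C,x)}\C(D,C)$, a $\gamma$-filtered colimit of the hom-sets $\C(D,C)$, each of cardinality $<\beta$ by hypothesis. The cocone component out of $(C,x)$ is the map $\C(D,C)\to M(D)$ sending $a\mapsto M(a)(x)$.

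For the first assertion I would then prove the purely set-theoretic fact that a $\gamma$-filtered colimit of sets each of size $<\beta$ (with $\beta<\gamma$) again has size $<\beta$. Suppose not, and pick $\beta$-many distinct elements $\ell_j\in M(D)$; each $\ell_j$ is $M(a_j)(x_j)$ for some $(C_j,x_j)\in\tx{El}(M)$ and $a_j\in\C(D,C_j)$. The family $\{(C_j,x_j)\}$ has at most $\beta<\gamma$ members, so by $\gamma$-filteredness there is a single object $(C_k,x_k)$ of $\tx{El}(M)$ together with morphisms $g_j\colon C_j\to C_k$ in $\C$ satisfying $M(g_j)(x_k)=x_j$. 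Then $\ell_j=M(g_ja_j)(x_k)$, so all the $\ell_j$ arise as images of the maps $g_ja_j\in\C(D,C_k)$ under one and the same cocone component; since the $\ell_j$ are distinct, so are the $g_ja_j$, forcing $|\C(D,C_k)|\geq\beta$ and contradicting the hypothesis. Hence $M(D)\in\bo{Set}_\beta$ for every $D$, i.e. $M$ lands in $\bo{Set}_\beta$.

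For the second assertion, assume further that $\C$ has fewer than $\gamma$ objects. Then there are fewer than $\gamma$ pairs of objects, each hom-set has fewer than $\beta<\gamma$ morphisms, and by regularity of $\gamma$ the total number of morphisms --- a $(<\gamma)$-indexed sum of $(<\gamma)$-sized sets --- is again $<\gamma$; thus $\C$ is (essentially) $\gamma$-small. Combining this with the first part, $M$ takes values in $\bo{Set}_\beta\subseteq\bo{Set}_\gamma$, so $M$ is $\gamma$-small by definition.

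The one genuinely delicate step is the cardinality estimate, and its heart is the strict inequality $\beta<\gamma$: one needs strictly more filteredness than the number of elements one is trying to collapse, so that an entire $\beta$-sized family can be pushed through a single object and hence into a single hom-set. The density presentation, the pointwise computation of colimits, and the regular-cardinal arithmetic at the end are all routine by comparison.
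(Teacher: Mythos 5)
Your proof is correct and is essentially the same as the paper's: the core of both arguments is the $\gamma$-filteredness of $\tx{El}(M)$ plus the pigeonhole observation that $\beta$-many distinct elements would all have to factor through a single hom-set of cardinality less than $\beta$; your detour through the colimit-of-representables presentation is harmless but unnecessary, since taking $(C_j,x_j)=(D,\ell_j)$ and $a_j=\mathrm{id}_D$ collapses it to the paper's direct argument. You also spell out the cardinal arithmetic for the second assertion (that $\C$ is then essentially $\gamma$-small), which the paper leaves implicit.
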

\begin{proof}
	Assume that $M$ doesn't land in $\bo{Set}_\beta$; then we can find $C\in\C$ and a family $(x_i\in M(C))_{i<\beta}$ of cardinality $\beta$ where the $x_i$ are all distinct. Since $M$ is $\gamma$-flat, its category of elements $\tx{El}(M)$ is $\gamma$-filtered. Now, the $x_i$ form a $\gamma$-small family of objects of $\tx{El}(M)$; thus there exists $y\in M D$ and morphisms $f_i\colon C\to D$ in $\C$ such that $M(f_i)(y)=x_i$ for any $i<\beta$. But by hypothesis the $f_i$ can't all be be distinct; contradicting the fact that all the $x_i$ actually are.
\end{proof}

\begin{lema}\label{flat+finite=cauchy}
	Every $\alpha$-small and $\alpha$-flat functor is Cauchy.
\end{lema}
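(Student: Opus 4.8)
The plan is to prove directly that $M$ is $\gamma$-flat for every regular cardinal $\gamma$; as recalled after the definition of Cauchy functor, this is precisely what it means for $M$ to be Cauchy (since $\tx{Lan}_YM$ is continuous if and only if it preserves $\gamma$-small limits for each regular $\gamma$). Using the equivalence $(1)\Leftrightarrow(3)$ of Proposition~\ref{flat-char}, which tells us that $M$ is $\gamma$-flat exactly when $\tx{El}(M)$ is $\gamma$-filtered, it then suffices to show that the category of elements $\tx{El}(M)$ is $\gamma$-filtered for every regular $\gamma$.

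First I would record that $\alpha$-smallness of $M$ forces $\tx{El}(M)$ to be an essentially $\alpha$-small category. Its objects are the pairs $(C,x)$ with $x\in M(C)$; since $\C$ has fewer than $\alpha$ objects up to isomorphism and each $M(C)$ lies in $\bo{Set}_\alpha$, regularity of $\alpha$ gives fewer than $\alpha$ objects. Moreover, as the projection $\pi\colon\tx{El}(M)\to\C$ is faithful, each hom-set of $\tx{El}(M)$ embeds into a hom-set of $\C$, which is $\alpha$-small; there being fewer than $\alpha$ such hom-sets, regularity of $\alpha$ again yields fewer than $\alpha$ morphisms in total. On the other hand, $\alpha$-flatness of $M$ gives, by Proposition~\ref{flat-char}, that $\tx{El}(M)$ is $\alpha$-filtered.

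The key step is to combine these two observations. Because $\tx{El}(M)$ is $\alpha$-small, the identity functor $\mathrm{id}\colon\tx{El}(M)\to\tx{El}(M)$ is itself an $\alpha$-small diagram, and since $\tx{El}(M)$ is $\alpha$-filtered this diagram admits a cocone: an object $t$ together with a natural family $\lambda_c\colon c\to t$. Such a cocone over the identity makes $\tx{El}(M)$ filtered for every cardinal at once. Indeed, given any diagram $F\colon\I\to\tx{El}(M)$, the family $(\lambda_{Fi}\colon Fi\to t)_i$ is a cocone over $F$: for each $u\colon i\to j$ in $\I$, naturality of $\lambda$ at the morphism $Fu$ gives $\lambda_{Fj}\circ Fu=\lambda_{Fi}$. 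Hence every diagram in $\tx{El}(M)$ has a cocone, so $\tx{El}(M)$ is $\gamma$-filtered for each regular $\gamma$, and therefore $M$ is $\gamma$-flat for each $\gamma$, i.e. Cauchy. (Equivalently, one sees that $\tx{El}(M)$ is absolute, matching the characterization noted after Proposition~\ref{flat-char}.)

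I expect the only real subtlety to lie in the bookkeeping of the second paragraph, namely verifying, via the regularity of $\alpha$, that the bounds on $\C$ and on the values of $M$ genuinely make $\tx{El}(M)$ essentially $\alpha$-small. Once this is in hand the heart of the argument — that an $\alpha$-small $\alpha$-filtered category carries a cocone over its own identity and is thereby filtered for all cardinals — is essentially immediate.
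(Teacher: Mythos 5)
Your argument is correct, but it takes a genuinely different route from the paper's. The paper's proof is formal: it embeds $\C$ into its free completion $\Phi_\alpha^\dagger\C$ under $\alpha$-small limits, uses $\alpha$-smallness of $M$ to form the limit $X$ of the canonical diagram $\tx{El}(M)\xrightarrow{\pi}\C\xrightarrow{J}\Phi_\alpha^\dagger\C$, proves $\tx{Lan}_JM\cong\Phi_\alpha^\dagger\C(X,-)$ by comparing two $\alpha$-continuous functors on their restrictions to $\C$, and then transfers the Cauchy property of this representable back along the fully faithful $J$ via Lemma~\ref{flat-restriction}. You instead work entirely inside the category of elements: essential $\alpha$-smallness plus $\alpha$-filteredness of $\tx{El}(M)$ produce a cocone over its identity functor, and such a cocone induces a cocone over \emph{every} diagram, so $\tx{El}(M)$ is $\gamma$-filtered for all $\gamma$ and $M$ is Cauchy by Proposition~\ref{flat-char} together with the remark following the definition of Cauchy functors. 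This is exactly the ``category of elements'' style of reasoning that the paper deliberately sets aside (see the remark after Lemma~\ref{flat-restriction}, where the author states a preference for the formal approach). The payoff of the formal proof is that it carries over essentially verbatim to the enriched setting (Lemma~\ref{V-flat+finite=cauchy}) and to sound classes, where categories of elements are not available; the payoff of yours is that it is more elementary (no free completions, no Kan extension calculus) and yields slightly finer information: naturality of the cocone at $\lambda_t$ gives $\lambda_t\circ\lambda_t=\lambda_t$, an idempotent exhibiting $M$ as a retract of the representable $\C(-,\pi t)$, which is the classical description of Cauchy $\bo{Set}$-valued functors.

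One point to tighten: $\tx{El}(M)$ is only \emph{essentially} $\alpha$-small, so its identity functor is not literally an $\alpha$-small diagram. Either replace $\C$ by an equivalent $\alpha$-small category at the outset (harmless, since flatness and the Cauchy condition are invariant under equivalence), or take the cocone over the inclusion of an $\alpha$-small skeleton of $\tx{El}(M)$ and extend it to all objects along chosen isomorphisms; the extended family is still natural, so the rest of your argument goes through unchanged.
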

\begin{proof}
	Let $M\colon \C^{op}\to\bo{Set}$ be $\alpha$-small and $\alpha$-flat. Consider the free completion $\Phi_\alpha^\dagger\C$ of $\C$ under $\alpha$-small limits, this comes together with the inclusion $J\colon \C\to\Phi_\alpha^\dagger\C$. Since $M$ is $\alpha$-small, $\tx{El}(M)$ is an essentially $\alpha$-small category; thus  we can consider the limit 
	$$ X:= \lim(\tx{El}(M)\stackrel{\pi}{\longrightarrow} \C \stackrel{J}{\longrightarrow} \Phi_\alpha^\dagger\C) $$ 
	in $\Phi_\alpha^\dagger\C$. Next we prove that $\tx{Lan}_JM$ is isomorphic to the representable $\Phi_\alpha^\dagger\C(X,-)$: since $M$ is $\alpha$-flat the Kan extension $\tx{Lan}_JM$ is $\alpha$-flat too (by Lemma~\ref{flat-restriction}) and hence $\alpha$-continuous; therefore it's enough to prove that $\tx{Lan}_JM$ and $\Phi_\alpha^\dagger\C(X,-)$ coincide when restricted to $\C$:
	\begin{align}
			\Phi_\alpha^\dagger\C(X,J-)&\cong \Phi_\alpha^\dagger\C(\lim_x(J\circ \pi x),J-)\nonumber \\
			&\cong \colim_x\Phi_\alpha^\dagger\C(J\circ \pi x,J-)\tag{1} \\
			&\cong \colim_x\C(\pi x,-)\nonumber \\
			&\cong M(-)\nonumber \\
			&\cong \tx{Lan}_JM(J-)\nonumber
	\end{align}
	as required, where (1) holds since $\Phi_\alpha^\dagger\C(-,JC)$ is $\alpha$-cocontinuous for any $C\in\C$ (property of the free completion). It follows that $\tx{Lan}_JM$ is representable, and hence a Cauchy weight (the left Kan extension of a representable functor along Yoneda is an evaluation map, and hence is continuous); by Lemma~\ref{flat-restriction} then $M$ is Cauchy as well.
\end{proof}

\begin{obs}
	This Lemma is true more generally for a sound class $\mt D$ as in \cite{ABLR02:articolo}: if $M$ is such that $\tx{El}(M)$ is at the same time in $\mt D$ and $\mt D$-filtered, then $M$ is Cauchy.
\end{obs}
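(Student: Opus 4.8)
The plan is to run the proof of Lemma~\ref{flat+finite=cauchy} essentially verbatim, replacing the doctrine of $\alpha$-small limits by that of $\mt D$-limits throughout. The soundness of $\mt D$ (in the sense of \cite{ABLR02:articolo}) is precisely what licenses the $\mt D$-versions of Proposition~\ref{flat-char} and Lemma~\ref{flat-restriction} that the argument relies on: for a sound class one still has that $M\colon\C^{op}\to\bo{Set}$ is $\mt D$-flat (meaning $\tx{Lan}_YM$ preserves $\mt D$-limits) if and only if $\tx{El}(M)$ is $\mt D$-filtered, and the restriction statement of Lemma~\ref{flat-restriction} continues to hold with ``$\alpha$-flat'' replaced by ``$\mt D$-flat''. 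Thus the two hypotheses in the Remark say exactly that $M$ is $\mt D$-flat and that $\tx{El}(M)$ is a legitimate $\mt D$-limit diagram.

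Concretely, I would first form the free completion $J\colon\C\to\Phi_{\mt D}^\dagger\C$ of $\C$ under $\mt D$-limits. Since $\tx{El}(M)\in\mt D$ by hypothesis, the composite $J\pi\colon\tx{El}(M)\to\Phi_{\mt D}^\dagger\C$ is a diagram whose shape lies in $\mt D$, so its limit $X:=\lim(J\pi)$ exists in $\Phi_{\mt D}^\dagger\C$. Next, because $\tx{El}(M)$ is $\mt D$-filtered the weight $M$ is $\mt D$-flat, hence so is $\tx{Lan}_JM$ by (the $\mt D$-version of) Lemma~\ref{flat-restriction}, and therefore $\tx{Lan}_JM$ is $\mt D$-continuous. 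I would then reproduce the chain of isomorphisms of Lemma~\ref{flat+finite=cauchy}: writing $X$ as the $\mt D$-limit $\lim_x J\pi x$, the representable $\Phi_{\mt D}^\dagger\C(X,J-)$ turns this limit into the corresponding colimit $\colim_x\C(\pi x,-)\cong M(-)$, the key step being that each $\Phi_{\mt D}^\dagger\C(-,JC)$ sends the freely adjoined $\mt D$-limits to colimits. Since both $\tx{Lan}_JM$ and $\Phi_{\mt D}^\dagger\C(X,-)$ are $\mt D$-continuous and agree on $\C$, they agree on all of $\Phi_{\mt D}^\dagger\C$, whence $\tx{Lan}_JM\cong\Phi_{\mt D}^\dagger\C(X,-)$ is representable and so a Cauchy weight; the Cauchy part of Lemma~\ref{flat-restriction}, applied to the fully faithful $J$, then gives that $M$ itself is Cauchy.

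The main obstacle is not the skeleton of the argument, which transcribes mechanically, but checking that each ingredient genuinely survives the passage from the class of $\alpha$-small categories to an arbitrary sound class $\mt D$. The delicate points are: (i) the equivalence between $\mt D$-flatness of $M$ and $\mt D$-filteredness of $\tx{El}(M)$, which is exactly where soundness of $\mt D$ enters and cannot be dropped; (ii) the universal property of $\Phi_{\mt D}^\dagger\C$, namely that the representables $\Phi_{\mt D}^\dagger\C(-,JC)$ are $\mt D$-cocontinuous and that $\mt D$-continuous functors out of $\Phi_{\mt D}^\dagger\C$ are determined by their restriction along $J$; and (iii) the verification that $\tx{El}(M)\in\mt D$ is the correct smallness hypothesis guaranteeing that $X$ exists as a $\mt D$-limit. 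Once these three facts are in place---each available from \cite{ABLR02:articolo} and \cite{Kel82:articolo}---the proof closes exactly as in the $\alpha$-small case.
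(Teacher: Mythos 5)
Your proposal is correct and matches the paper's intended justification: the Remark is stated without a separate proof precisely because the argument of Lemma~\ref{flat+finite=cauchy} transcribes mutatis mutandis with $\alpha$-small limits replaced by $\mt D$-limits, which is exactly what you carry out. The three points you flag --- that soundness of $\mt D$ gives the equivalence between $\mt D$-flatness of $M$ and $\mt D$-filteredness of $\tx{El}(M)$, that the free completion $\Phi_{\mt D}^\dagger\C$ has the required universal property (with $\Phi_{\mt D}^\dagger\C(-,JC)$ sending the freely added $\mt D$-limits to colimits, and Lemma~\ref{flat-restriction} holding for any class of limits by its formal proof), and that $\tx{El}(M)\in\mt D$ is what guarantees the existence of the limit $X$ --- are indeed the only places where the hypotheses enter, just as in the $\alpha$-small case.
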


As a consequence:

\begin{cor}\label{gammaflat-cauchy}
	Let $\C$ be a small category and $\gamma$ be a regular cardinal as in the last part of Lemma~\ref{gamma-flat-small}. Then every $\gamma$-flat functor $M\colon\C^{op}\to\bo{Set}$ is Cauchy.
\end{cor}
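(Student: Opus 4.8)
The plan is to obtain this as an immediate consequence of the two preceding lemmas, once the hypotheses are lined up. Since $\C$ is small and $\gamma$ is chosen as in the last part of Lemma~\ref{gamma-flat-small}, we have in particular that every hom-set $\C(B,C)$ is $\beta$-small for some $\beta<\gamma$ and that $\C$ has fewer than $\gamma$ objects. First I would invoke the last assertion of Lemma~\ref{gamma-flat-small}: under exactly these conditions, any $\gamma$-flat functor $M\colon\C^{op}\to\bo{Set}$ is $\gamma$-small. (Here the hypothesis that $\C$ is small is what guarantees such a $\beta$ and hence such a $\gamma$ exist in the first place.)

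Having established that $M$ is $\gamma$-small, I would then observe that $M$ is simultaneously $\gamma$-small and $\gamma$-flat, so Lemma~\ref{flat+finite=cauchy} applies verbatim with $\alpha=\gamma$, yielding that $M$ is Cauchy. This completes the argument. There is essentially no obstacle to overcome: the work has already been done in the two lemmas, and the corollary is just the composite of their conclusions. The only thing to be careful about is that the regular cardinal $\gamma$ in the corollary is the same one satisfying both conditions of Lemma~\ref{gamma-flat-small} (that $\gamma>\beta$ and that $\C$ has fewer than $\gamma$ objects), which is precisely what the phrase ``as in the last part of Lemma~\ref{gamma-flat-small}'' fixes.
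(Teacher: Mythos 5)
Your proof is correct and matches the paper's own argument exactly: the paper likewise proves this corollary as a direct consequence of Lemma~\ref{gamma-flat-small} (giving $\gamma$-smallness of $M$) combined with Lemma~\ref{flat+finite=cauchy} applied with $\alpha=\gamma$. Your spelled-out version, including the care taken that $\gamma$ simultaneously satisfies both conditions of the lemma, is just a more explicit rendering of the same two-step composite.
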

\begin{proof}
	This is a direct consequence of Lemma~\ref{gamma-flat-small} and~\ref{flat+finite=cauchy}.
\end{proof}

In short, it's enough to take $\gamma=\beta^+$, where $\beta$ is such that $\C$ is $\beta$-small, but the description given in the Corollary above might provide a smaller cardinal. For example taking $\K=\bo{Set}$ and $\alpha=\aleph_0$, since $\bo{Set}_f$ is $\aleph_1$-small, then we can certainly consider $\gamma=\aleph_2$; however the hypotheses of Lemma~\ref{gamma-flat-small} provide a better $\gamma$, in fact we can actually choose $\gamma=\aleph_1$ (since $\bo{Set}_f$ has countably many objects and the hom-sets are all finite).

We can now prove the main result of this section:

\begin{teo}\label{gamm-cont}
	Let $\K$ be locally $\alpha$-presentable. There exists a regular cardinal $\gamma$ for which every $\alpha$-accessible and $\gamma$-continuous $F\colon \K\to\L$, with $\L$ locally $\alpha$-presentable, is in fact continuous.
\end{teo}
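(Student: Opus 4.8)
The plan is to fix, once and for all, the cardinal $\gamma$ produced by Corollary~\ref{gammaflat-cauchy} applied to the essentially small category $\C:=\K_\alpha^{op}$, so that every $\gamma$-flat functor $\K_\alpha\to\bo{Set}$ is Cauchy; this is the $\gamma$ that depends entirely on $\K_\alpha$. Given an $\alpha$-accessible and $\gamma$-continuous $F\colon\K\to\L$, the first step is to reduce to $\bo{Set}$-valued functors. Since $\L$ is locally $\alpha$-presentable, the representables $\L(A,-)\colon\L\to\bo{Set}$ with $A\in\L_\alpha$ are jointly conservative and each preserves limits, so a cone in $\L$ is a limit precisely when every $\L(A,-)$ sends it to one; consequently $F$ is continuous if and only if each composite $G_A:=\L(A,F-)\colon\K\to\bo{Set}$ is. Because $A$ is $\alpha$-presentable and $\L(A,-)$ preserves all limits, each $G_A$ is again $\alpha$-accessible and $\gamma$-continuous, so it is enough to treat an $\alpha$-accessible, $\gamma$-continuous $G\colon\K\to\bo{Set}$.

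For such a $G$, write $M:=G\circ Z\colon\K_\alpha\to\bo{Set}$ for its restriction along the inclusion $Z\colon\K_\alpha\hookrightarrow\K$; since $\K\simeq\tx{Ind}_\alpha(\K_\alpha)$ and $G$ preserves $\alpha$-filtered colimits we have $G\cong\tx{Lan}_Z M$. Using the Gabriel--Ulmer representation $\K\simeq\alpha\tx{-Cont}(\K_\alpha^{op},\bo{Set})$ we regard $\K$ as a full subcategory of $[\K_\alpha^{op},\bo{Set}]$ via a fully faithful inclusion $i$, with $Y=i\circ Z$ for $Y$ the Yoneda embedding; this $i$ is continuous because $\alpha$-continuous functors are closed under all limits, computed pointwise. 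Composing Kan extensions gives $\tx{Lan}_Y M\cong\tx{Lan}_i G$ and, as $i$ is fully faithful, $G\cong(\tx{Lan}_Y M)\circ i$. Consequently, if $M$ is Cauchy then $\tx{Lan}_Y M$ is continuous by definition, and hence so is $G$, being the composite of the continuous functors $i$ and $\tx{Lan}_Y M$. By our choice of $\gamma$ it therefore suffices to prove that $M$ is $\gamma$-flat.

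The heart of the argument is thus to show, via Proposition~\ref{flat-char}, that $\tx{El}(M)$ is $\gamma$-filtered. Given a $\gamma$-small diagram in $\tx{El}(M)$, its objects are pairs $(A_i,x_i)$ with $A_i\in\K_\alpha$ and $x_i\in G(A_i)$, and the connecting morphisms exhibit $(x_i)_i$ as a compatible family, that is, as an element of the limit $\lim_i G(A_i)$ taken over the associated $\gamma$-small diagram $A_\bullet$ in $\K$. Here the two hypotheses on $G$ combine: $\gamma$-continuity identifies this limit with $G(\lim_i A_i)$, producing an element $\xi$ over the apex $P:=\lim_i A_i$; and $\alpha$-accessibility, applied to a presentation of $P$ as an $\alpha$-filtered colimit of objects of $\K_\alpha$, lets us factor $\xi$ through some $(B,\eta)$ with $B\in\K_\alpha$. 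Composing the factorization $B\to P$ with the limit projections $P\to A_i$ then yields a cocone from the given diagram to $(B,\eta)$ in $\tx{El}(M)$, establishing $\gamma$-filteredness. The main obstacle to get right is precisely this last step, where one must keep careful track of the variance in $\tx{El}(M)$, so that the constructed maps assemble into a cocone rather than a cone, and verify that $\xi$ descends to an $\alpha$-presentable object while remaining compatible with all morphisms of the indexing diagram, not merely with its objects.
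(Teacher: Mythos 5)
Your proof is correct, and in outline it is the paper's proof: the same choice of $\gamma$ (Corollary~\ref{gammaflat-cauchy} applied to $\C=\K_\alpha^{op}$), the same reduction to $\L=\bo{Set}$ via the jointly conservative, limit-preserving representables $\L(A,-)$ with $A\in\L_\alpha$, the same factorization $G\cong(\tx{Lan}_YM)\circ i$ through $[\K_\alpha^{op},\bo{Set}]$, and the same endgame ($M$ $\gamma$-flat $\Rightarrow$ $M$ Cauchy $\Rightarrow$ $\tx{Lan}_YM$ continuous $\Rightarrow$ $G$ continuous). The one genuine difference is how $\gamma$-flatness of $M=G|_{\K_\alpha}$ is established. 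The paper does it formally via clause (2) of Proposition~\ref{flat-char}: a $\gamma$-small limit of representables in $[\K_\alpha^{op},\bo{Set}]$ is the image under the continuous fully faithful $\K(J,1)$ of a $\gamma$-small limit in $\K$, so $\gamma$-continuity of $F$ says exactly that $\tx{Lan}_Y(FJ)$ preserves such limits. You instead verify clause (3) by hand: $\gamma$-continuity of $G$ produces an element $\xi\in G(P)$ over the limit $P$ of the projected diagram, and $\alpha$-accessibility (used a second time) pushes $\xi$ down to some $B\in\K_\alpha$, yielding the cocone; your variance bookkeeping is right (a map $(A_i,x_i)\to(B,\eta)$ in $\tx{El}(M)$ is a $\K_\alpha$-map $B\to A_i$, and naturality follows from the cone identities $g_u\circ p_j=p_i$), so the argument goes through. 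Your version is more elementary and makes visible how the two hypotheses on $G$ interact, at the cost of length; the paper's is a two-line formal argument. One small repair: deducing $G\cong(\tx{Lan}_YM)\circ i$ ``as $i$ is fully faithful'' is too quick, since restricting along a fully faithful functor recovers the original functor only for \emph{pointwise} Kan extensions; the isomorphism does hold, but a cleaner justification is that $(\tx{Lan}_YM)\circ i$ is $\alpha$-accessible ($\tx{Lan}_YM$ is cocontinuous and $i$ preserves $\alpha$-filtered colimits, the objects of $\K_\alpha$ being $\alpha$-presentable) and restricts along $Z$ to $M$ (because $(\tx{Lan}_YM)\circ Y\cong M$), hence it must agree with the $\alpha$-accessible extension $\tx{Lan}_ZM\cong G$.
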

\begin{proof}
	Let $\gamma$ be the one given in Corollary~\ref{gammaflat-cauchy} for $\C=\K_\alpha^{op}$, and denote by $J\colon\K_\alpha\hookrightarrow\K$ the inclusion.
	
	Notice that a functor $F\colon \K\to\L$ as above is continuous if and only if $\L(A,F-)$ is such for any $A\in\L_\alpha$. Since $\L(A,F-)$ is still $\alpha$-accessible and preserves all the limits that $F$ preserves, we can assume without loss of generality $\L=\bo{Set}$. 
	Therefore, we are given an $\alpha$-accessible and $\gamma$-continuous $F\colon \K\to\bo{Set}$, and we need to prove that it is actually continuous. Since $F$ is $\alpha$-accessible, it is the left Kan extension of its restriction $FJ\colon\K_\alpha\to\bo{Set}$; as a consequence the following triangles commute (up to isomorphism),
	
	\begin{center}
		\begin{tikzpicture}[baseline=(current  bounding  box.south), scale=2]

			\node (a) at (0,0.7) {$[\K_{\alpha}^{op},\bo{Set}]$};
			\node (c) at (0, 0) {$\K$};
			\node (d) at (1.2, 0) {$\bo{Set}$};
			\node (e) at (0,-0.7) {$\K_\alpha$};
			
			\path[font=\scriptsize]
			
			(c) edge [right hook->] node [left] {$\K(J,1)$} (a)
			(e) edge [right hook->] node [left] {$J$} (c)
			(a) edge [->] node [above] {$\ \ \ \ \ \ \ \ \ \tx{Lan}_Y(FJ)$} (d)
			(c) edge [->] node [above] {$F\ \ $} (d)
			(e) edge [->] node [below] {$\ \ \ \ \ \ \ \ \ FJ$} (d);
			
		\end{tikzpicture}	
	\end{center} 
	where the vertical composite is the Yoneda embedding $Y\colon\K_{\alpha}\to [\K_{\alpha}^{op},\bo{Set}]$, and $\K(J,1)$ is continuous since it identifies $\K$ with the $\alpha$-continuous functors out of $\K_\alpha^{op}$. Now, $F$ is $\gamma$-continuous by hypothesis; thus $\tx{Lan}_Y(FJ)$ preserves $\gamma$-small limits of representables (these being $\gamma$-small limits in $\K$) and hence $FJ$ is $\gamma$-flat by Proposition~\ref{flat-char}. By our choice of $\gamma$, it follows from Corollary~\ref{gammaflat-cauchy} that $FJ$ is Cauchy and thus $\tx{Lan}_Y(FJ)$ is continuous. Therefore $F$ is continuous being the composite of two continuous functors.
\end{proof}

\begin{obs}\label{optimal}
	The optimal regular cardinal $\gamma$ provided by our proofs is one for which:\begin{itemize}
		\item $\K_\alpha$ has less than $\gamma$ objects (up to isomorphism);
		\item there exists $\beta$ such that $\gamma>\beta>\#\K(X,Y)$ for each $X,Y\in\K_\alpha$.
	\end{itemize}
\end{obs}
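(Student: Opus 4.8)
The plan is to read off the two displayed constraints directly from the way $\gamma$ was constructed, by unwinding the chain Theorem~\ref{gamm-cont} $\to$ Corollary~\ref{gammaflat-cauchy} $\to$ Lemma~\ref{gamma-flat-small}. Recall that in the proof of Theorem~\ref{gamm-cont} we chose $\gamma$ to be the cardinal furnished by Corollary~\ref{gammaflat-cauchy} for the small category $\C=\K_\alpha^{op}$, and that Corollary~\ref{gammaflat-cauchy} demands nothing of $\gamma$ beyond that it be a regular cardinal satisfying the hypotheses of the final sentence of Lemma~\ref{gamma-flat-small}. So I would isolate precisely those hypotheses --- that $\C(B,C)$ be $\beta$-small for all $B,C$, that $\gamma>\beta$, and that $\C$ have fewer than $\gamma$ objects --- and transport them back along the identification of objects between $\K_\alpha^{op}$ and $\K_\alpha$.

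Carrying this out, the objects of $\C=\K_\alpha^{op}$ are exactly those of $\K_\alpha$, so the clause ``$\C$ has fewer than $\gamma$ objects'' becomes the first bullet: $\K_\alpha$ has fewer than $\gamma$ objects up to isomorphism. For the hom-sets I would compute $\C(B,C)=\K_\alpha^{op}(B,C)=\K_\alpha(C,B)=\K(C,B)$, using that $\K_\alpha$ is full in $\K$; hence ``$\C(B,C)$ is $\beta$-small for all $B,C$'' is literally the assertion $\#\K(X,Y)<\beta$ for all $X,Y\in\K_\alpha$, and coupling this with the surviving inequality $\gamma>\beta$ yields the second bullet $\gamma>\beta>\#\K(X,Y)$.

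The remaining point --- and the only one needing more than bookkeeping --- is to confirm that these two constraints are exactly what the proofs consume, so that the description is optimal for the method rather than merely sufficient. I would observe that in Lemma~\ref{gamma-flat-small} the hom-size bound together with $\gamma>\beta$ is what forces a $\gamma$-flat $M$ to land in $\bo{Set}_\beta\subseteq\bo{Set}_\gamma$, whereas the bound on the number of objects is precisely what makes $\K_\alpha^{op}$ essentially $\gamma$-small; these two facts together are the whole content of ``$M$ is $\gamma$-small,'' which is then fed into Lemma~\ref{flat+finite=cauchy} to deduce Cauchyness. Since no further property of $\gamma$ is invoked anywhere along the route, any regular $\gamma$ meeting the two bullets works, improving on the cruder choice $\gamma=\beta^+$ for a single $\beta$ bounding the whole category $\K_\alpha^{op}$ at once. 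Existence of such cardinals is guaranteed by the standard fact that $\K_\alpha$ is essentially small with small hom-sets.
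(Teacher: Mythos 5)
Your proposal is correct and is exactly the justification the paper intends: the remark carries no separate proof, being precisely the result of reading off the hypotheses of Lemma~\ref{gamma-flat-small} (hom-sets $\beta$-small with $\gamma>\beta$, fewer than $\gamma$ objects) as consumed via Corollary~\ref{gammaflat-cauchy} in the proof of Theorem~\ref{gamm-cont} with $\C=\K_\alpha^{op}$. Your bookkeeping --- identifying $\K_\alpha^{op}(B,C)$ with $\K(C,B)$ by fullness of $\K_\alpha$ in $\K$, noting that only essential smallness is available (hence ``up to isomorphism''), and checking that no further property of $\gamma$ is used anywhere along the route --- is accurate and matches the paper's reasoning.
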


By taking $\K=\bo{Set}$, $\alpha=\aleph_0$, and $\gamma=\aleph_1$ (thanks to the comments above the Theorem), it follows that a finitary functor $F\colon\bo{Set}\to\bo{Set}$ is continuous if and only if it preserves countable products and equalizers.

\subsection{Some Applications}

\subsubsection{An(other) adjoint functor theorem}

Freyd's {\em general adjoint functor theorem} says that if $\K$ is complete and $F\colon\K\to\L$ is continuous and satisfies the solution set condition, then it has a left adjoint. In the context of locally presentable categories this implies that every continuous and accessible functor between locally presentable categories has a left adjoint \cite[Theorem~1.66]{AR94:libro}. Our result is a specialization of this to the case of $\alpha$-accessible functors: 

\begin{teo}\label{adjoint}
	Let $\K$ be locally $\alpha$-presentable. There exists a regular cardinal $\gamma$ such that for any $\alpha$-accessible $U\colon \K\to\L$, with $\L$ locally $\alpha$-presentable, the following are equivalent:\begin{enumerate}
		\item $U$ has a left adjoint;
		\item $U$ is $\gamma$-continuous.
	\end{enumerate}
\end{teo}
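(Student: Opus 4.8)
The plan is to derive this directly from Theorem~\ref{gamm-cont} together with the standard adjoint functor theorem for locally presentable categories cited just above. The key observation is that the $\gamma$ here can be taken to be exactly the $\gamma$ produced in Theorem~\ref{gamm-cont}, and that the two implications are of very different character: one direction is essentially classical, while the other is where the new result does its work.

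First I would prove $(1)\Rightarrow(2)$. If $U$ has a left adjoint then $U$ is a right adjoint and hence continuous (right adjoints preserve all limits); in particular it is $\gamma$-continuous. This direction needs nothing beyond the elementary fact that right adjoints preserve limits and does not even use the accessibility hypothesis or the specific value of $\gamma$.

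For the converse $(2)\Rightarrow(1)$, suppose $U$ is $\alpha$-accessible and $\gamma$-continuous, with $\gamma$ chosen as in Theorem~\ref{gamm-cont} (which depends only on $\K_\alpha$). By that theorem, $U$ is in fact continuous. Now I can invoke the classical result \cite[Theorem~1.66]{AR94:libro}: a continuous and accessible functor between locally presentable categories has a left adjoint. Since $U$ is continuous (just established) and $\alpha$-accessible (hence accessible), this applies verbatim and yields a left adjoint for $U$. Thus the only role the new machinery plays is to upgrade the hypothesis of $\gamma$-continuity to genuine continuity, after which the existing adjoint functor theorem closes the argument.

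The main obstacle is conceptual rather than technical: one must be sure that the cardinal $\gamma$ witnessing Theorem~\ref{gamm-cont} is allowed to be the same $\gamma$ appearing in the statement, and that the hypotheses line up exactly. Specifically, Theorem~\ref{gamm-cont} requires both $\K$ and $\L$ to be locally $\alpha$-presentable and requires $U$ to be $\alpha$-accessible and $\gamma$-continuous --- precisely the standing assumptions here --- so there is no mismatch. I would also note that $\L$ being locally $\alpha$-presentable is what makes \cite[Theorem~1.66]{AR94:libro} applicable, since that theorem is stated for functors between locally presentable categories. Beyond checking these compatibilities, the proof is a short two-line deduction, so I expect no genuine difficulty.
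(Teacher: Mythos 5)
Your proposal is correct and coincides with the paper's own (implicit) argument: the paper presents Theorem~\ref{adjoint} precisely as the combination of Theorem~\ref{gamm-cont} (upgrading $\gamma$-continuity to continuity for $\alpha$-accessible functors) with the classical adjoint functor theorem for locally presentable categories \cite[Theorem~1.66]{AR94:libro}, with the same choice of $\gamma$ and the same easy converse via right adjoints preserving limits.
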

\noindent Note that $\gamma$ can be chosen again as in Remark~\ref{optimal}.

\subsubsection{Dualizable modules}

Let $\K=R\tx{-}\bo{Mod}$ be the monoidal category of $R$-modules for a commutative ring $R$, and $\alpha=\aleph_0$. Then we can use Theorem~\ref{gamm-cont} to characterize the dualizable $R$-modules (that is, the dualizable objects of $R\tx{-}\bo{Mod}$); these can also be described as the finitely generated projective $R$-modules.

First let's focus on the optimal choice of $\gamma$:\begin{itemize}
	\item if $R$ if finite, then $R\tx{-}\bo{Mod}_f$ has countably many objects and its hom-sets are all finite; so we can choose $\gamma=\aleph_1$;
	\item if $\alpha=\#R$ is infinite, then $R\tx{-}\bo{Mod}$ has countably many objects but its hom-sets have cardinality $\alpha$. Thus we can take $\gamma=\alpha^{++}$. 
\end{itemize}

Let $M$ be an $R$-module; then $M$ is dualizable if and only if the functor $M\otimes-\colon R\tx{-}\bo{Mod}\to R\tx{-}\bo{Mod}$ is continuous (it should actually be continuous as an enriched $R\tx{-}\bo{Mod}$-functor, but these are equivalent conditions, see also Section~\ref{dualizable}). Since every functor $M\otimes-$ is cocontinuous (and hence finitary), a consequence of Theorem~\ref{gamm-cont} is:

\begin{prop}
	An $R$-module $M$ is dualizable if and only if it is flat and $M\otimes-$ preserves $\gamma$-small products.
\end{prop}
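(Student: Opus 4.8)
The plan is to characterize dualizability of $M$ via continuity of $M \otimes -$ and then apply Theorem~\ref{gamm-cont}. First I would recall the reduction stated in the setup: an $R$-module $M$ is dualizable precisely when $M\otimes-\colon R\text{-}\bo{Mod}\to R\text{-}\bo{Mod}$ is continuous. Since $M\otimes-$ is a left adjoint (its right adjoint is the internal hom $\tx{Hom}_R(M,-)$), it is cocontinuous, and in particular it preserves all colimits, hence is finitary; thus it is an $\alpha$-accessible functor out of the locally $\aleph_0$-presentable category $R\text{-}\bo{Mod}$ with $\alpha=\aleph_0$. This places us exactly in the hypotheses of Theorem~\ref{gamm-cont}.

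Next I would invoke Theorem~\ref{gamm-cont} with $\gamma$ chosen as in Remark~\ref{optimal}: the functor $M\otimes-$ is continuous if and only if it preserves all $\gamma$-small limits. The remaining task is to simplify the condition ``preserves $\gamma$-small limits'' to the two named conditions, namely that $M$ be flat and that $M\otimes-$ preserve $\gamma$-small products. The key observation is that a $\gamma$-small limit can be built from $\gamma$-small products and equalizers (equivalently, products and kernels, since $R\text{-}\bo{Mod}$ is abelian); so I would argue that preserving $\gamma$-small limits is equivalent to preserving $\gamma$-small products together with preserving equalizers (or kernels). For the equalizer part, I would recall the standard fact that $M\otimes-$ preserves kernels (equivalently, is left exact, equivalently preserves all finite limits) if and only if $M$ is flat; this is essentially the definition of flatness phrased via exactness of the tensor functor. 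Combining these, the preservation of all $\gamma$-small limits decomposes exactly into flatness plus preservation of $\gamma$-small products.

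The main obstacle I anticipate is checking the interplay between the two halves cleanly: one must verify that $M\otimes-$ preserving $\gamma$-small products and being left exact (flat) together \emph{suffice} for preservation of all $\gamma$-small limits, and that neither condition is redundant. The sufficiency follows from the construction of limits as equalizers of maps between products, combined with the fact that $M\otimes-$, being additive, preserves the relevant biproduct and equalizer structure once it preserves products of the right size and is exact on finite limits; the cardinal arithmetic only enters through keeping the products $\gamma$-small. I would also note in passing that flatness is automatically the ``finite-limit'' content (since finite products are preserved by any additive functor and the only genuine finite-limit obstruction is exactness/kernels), so the two bullet conditions are indeed the correct refinement of $\gamma$-continuity in this abelian setting. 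No delicate estimate beyond the standard limit-from-products-and-equalizers reduction is required, so the proof should be short once Theorem~\ref{gamm-cont} is applied.
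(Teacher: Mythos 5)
Your proposal is correct and follows essentially the same route as the paper: dualizability is reduced to continuity of $M\otimes-$, which is cocontinuous and hence finitary, so Theorem~\ref{gamm-cont} applies, and $\gamma$-continuity is then decomposed into preservation of $\gamma$-small products plus equalizers, the latter being exactly flatness for the additive functor $M\otimes-$. The paper leaves this decomposition implicit, so your explicit verification that $\gamma$-small limits are built from $\gamma$-small products and equalizers, and that flatness supplies the finite-limit content, is a faithful elaboration of the intended argument.
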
  

For a finite $R$ this is saying that $M$ needs to be flat and $M\otimes-$ needs to preserve countable products. Note however that the choice of $\gamma$ is not optimal in general: for $R=\mathbb{Z}$ it's enough to take $\gamma=\aleph_1$ (easy to check), while by the results above we are only given $\gamma=\aleph_2$.

This result about dualizable modules seems to be new, as far as the author knows. A result which is similar in style (but different in content) can be found in \cite{Bass63:articolo}. There Bass proves that any {\em projective} $R$-module that satisfies some conditions involving a fixed cardinal is actually {\em free}; instead we prove that any {\em flat} $R$-module that satisfies some (other) conditions involving a fixed cardinal is actually {\em dualizable}.

\subsubsection{Recognising the $\alpha$-presentables}\label{boolean}

Let $\K$ be a locally $\alpha$-presentable category for which $\K_{\alpha}$ is $\alpha$-complete. Let $\widehat{\K}:=\tx{Ind}_\alpha(\K_\alpha^{op})$ be the free cocompletion of $\K_{\alpha}^{op}$ under $\alpha$-filtered colimits, so that $\widehat{\K}\simeq\alpha\tx{-Cont}(\K_{\alpha},\bo{Set})$; denote by $H\colon \K_{\alpha}\to\K$ and $J\colon \K_{\alpha}^{op}\to\widehat{\K}$ the inclusions. Then we can consider the following composite:
\begin{center}
	\begin{tikzpicture}[baseline=(current  bounding  box.south), scale=2]
		
		\node (a) at (0,0) {$\K$};
		\node (c) at (1.6, 0) {$\alpha\tx{-Cont}(\K_\alpha^{op},\bo{Set})$};
		\node (d) at (3.7, 0) {$\alpha\tx{-Filt}(\widehat{\K},\bo{Set})$};
		
		\path[font=\scriptsize]
		
		(a) edge [->] node [above] {$\K(H,1)$} (c)
		(c) edge [->] node [above] {$\tx{Lan}_J$} (d);
		
	\end{tikzpicture}	
\end{center} 
where $\K(H,1)$ is actually an equivalence, and $\alpha\tx{-Filt}(\widehat{\K},\bo{Set})$ is the full subcategory of $[\widehat{\K},\bo{Set}]$ spanned by the functors that preserve $\alpha$-filtered colimits (which is equivalent to $[\K_\alpha^{op},\bo{Set}]$). Note that $\tx{Lan}_J$ is fully faithful and its essential image is given by those $\alpha$-accessible functors which are also $\alpha$-continuous.

Call the composite of these $G\colon \K\to \alpha\tx{-Filt}(\widehat{\K},\bo{Set})$. It's easy to see that if $X\in\K_{\alpha}$ then $GX\cong\widehat{\K}(X,-)$ is representable and hence continuous; vice versa if $F\colon\widehat{\K}\to\bo{Set}$ is continuous and preserves $\alpha$-filtered colimits then $F\cong \widehat{\K}(X,-)$ for some $X\in\widehat{\K}_{\alpha}\simeq\K_{\alpha}^{op}$; thus $F\cong GX$. Let now $\gamma$ be as in Remark~\ref{optimal}, then:

\begin{prop}
	An object $X$ of $\K$ is $\alpha$-presentable if and only if $GX$ preserves $\gamma$-small products.
\end{prop}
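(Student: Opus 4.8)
The plan is to reduce the statement to Theorem~\ref{gamm-cont}, but applied to $\widehat{\K}$ rather than to $\K$, exploiting the observation made just before the proposition that the continuous functors in $\alpha\tx{-Filt}(\widehat{\K},\bo{Set})$ are precisely those of the form $GY$ with $Y\in\K_\alpha$. Before starting I would record three facts. First, $G$ is fully faithful, being the composite of the equivalence $\K(H,1)$ with the fully faithful $\tx{Lan}_J$. Second, $\widehat{\K}=\tx{Ind}_\alpha(\K_\alpha^{op})$ is itself locally $\alpha$-presentable: the standing assumption that $\K_\alpha$ be $\alpha$-complete says exactly that $\K_\alpha^{op}$ has all $\alpha$-small colimits, which is what makes its free $\alpha$-filtered cocompletion complete. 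Third, since $\widehat{\K}_\alpha\simeq\K_\alpha^{op}$ has the same number of objects as $\K_\alpha$ and hom-sets of the same size, the cardinal $\gamma$ described in Remark~\ref{optimal} for $\K$ is equally admissible for $\widehat{\K}$, and this is the $\gamma$ appearing in the statement.

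The forward implication is then immediate: for $X\in\K_\alpha$ we have $GX\cong\widehat{\K}(X,-)$, which is representable and hence continuous, so a fortiori it preserves $\gamma$-small products.

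For the converse I would start from an $X$ with $GX$ preserving $\gamma$-small products and aim to show $GX$ is $\gamma$-continuous, so that Theorem~\ref{gamm-cont} applies. The main point is that $GX$ lies in the essential image of $\tx{Lan}_J$, and is therefore not merely $\alpha$-accessible but also $\alpha$-continuous; in particular it preserves all finite limits, and equalizers especially. Since every $\gamma$-small limit is the equalizer of a suitable pair of maps between two $\gamma$-small products, preservation of $\gamma$-small products and of equalizers together force preservation of every $\gamma$-small limit. At this point $GX\colon\widehat{\K}\to\bo{Set}$ is $\alpha$-accessible and $\gamma$-continuous with locally $\alpha$-presentable codomain $\bo{Set}$, so Theorem~\ref{gamm-cont} (for $\widehat{\K}$, with the $\gamma$ fixed above) makes it continuous. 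Being continuous and $\alpha$-accessible, $GX\cong GY$ for some $Y\in\widehat{\K}_\alpha\simeq\K_\alpha^{op}$, that is for some $Y\in\K_\alpha$; and full faithfulness of $G$ then gives $X\cong Y$, so $X$ is $\alpha$-presentable.

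I expect the delicate step to be precisely the passage from the hypothesis (preservation of $\gamma$-small \emph{products}) to the $\gamma$-continuity demanded by Theorem~\ref{gamm-cont} (preservation of all $\gamma$-small \emph{limits}); indeed it is only because $GX$ is automatically $\alpha$-continuous that the missing finite limits needed to assemble $\gamma$-small limits out of $\gamma$-small products are available for free, which is what allows the statement to ask for products alone.
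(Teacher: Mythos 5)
Your proof is correct and follows the same route as the paper: the paper's (very terse) proof likewise applies Theorem~\ref{gamm-cont} to $GX\colon\widehat{\K}\to\bo{Set}$, using exactly the observation that $GX$ already preserves finite limits (being $\alpha$-continuous, as it lies in the image of $\tx{Lan}_J$), so that preservation of $\gamma$-small products suffices for $\gamma$-continuity. Your write-up simply makes explicit the details the paper leaves implicit — local $\alpha$-presentability of $\widehat{\K}$, the transfer of $\gamma$ via $\widehat{\K}_\alpha\simeq\K_\alpha^{op}$, and the use of full faithfulness of $G$ to conclude.
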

\begin{proof}
	Use the results above and Theorem~\ref{gamm-cont}, plus the fact that $GX$ preserves $\gamma$-small limits if and only if it preserves $\gamma$-small products (since it already preserves finite limits).
\end{proof}

As an example, consider $\alpha=\aleph_0$ and $\K=\bo{Bool}$ the category of boolean algebras and morphisms between them. Now, since $\bo{Bool}_f\simeq\bo{Set}_f^{op}$, it follows that $\widehat{\bo{Bool}}=\bo{Set}$; moreover for any $B\in\bo{Bool}$ the functor $GB\colon\bo{Set}\to\bo{Set}$ defined above can be described as the one sending a set $X$ to the set
$$ GB(X)=\{f\colon X\to B\tx{ of finite support }|\ \bigvee_i fi=1\tx{ and }\ fi\wedge fj=0 \ \forall i\neq j \} $$
since this preserves filtered colimits and, with this definition of $G$, we have $GB(X)\cong\bo{Bool}(2^X,B)$ for any finite set $X$. Then by Remark~\ref{optimal} we can choose $\gamma=\aleph_1$, and hence the proposition above says that a boolean algebra $B$ is finite if and only if the functor $GB$ preserves countable products. The endofunctor $GB$ is actually a monad on $\bo{Set}$ whose algebras are the $B$-sets (see \cite{Ber91:articolo} where $GB(X)$ is denoted by $X[B]^*$).

\section{The enriched case}\label{V}

We now fix $\V=(\V_0,\otimes,I)$ to be a symmetric monoidal closed and locally presentable category, and consider $\alpha_0$ such that $\V$ is locally presentable as a closed category in the sense of \cite{Kel82:articolo} (such $\alpha_0$ exists by \cite[Proposition~2.4]{KL2001:articolo}). From now on every cardinal will be assumed to be greater or equal to $\alpha_0$.

We follow the terminology of \cite{Kel82:libro} for general enriched concepts, and we assume the reader to be familiar with the notion of locally presentable $\V$-category as well as that of $\alpha$-small weighted limit, introduced in \cite{Kel82:articolo}. Accordingly, we say that a $\V$-functor is $\alpha$-continuous if it preserves all the $\alpha$-small weighted limits.

Then we can immediately generalize Theorem~\ref{gamm-cont} to the enriched setting:

\begin{teo}\label{V-gamma-cont}
	Let $\K$ be a locally $\alpha$-presentable $\V$-category. There exists a regular cardinal $\gamma$ for which every $\alpha$-accessible and $\gamma$-continuous $F\colon \K\to\L$, with $\L$ locally $\alpha$-presentable, is in fact continuous.
\end{teo}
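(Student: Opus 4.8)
The plan is to mirror the proof of Theorem~\ref{gamm-cont} as closely as possible, replacing ordinary (co)limits by weighted ones and $\bo{Set}$ by $\V_0$. First I would reduce to the case $\L=\V$ exactly as before: a $\V$-functor $F\colon\K\to\L$ is continuous if and only if $\L(A,F-)\colon\K\to\V$ is continuous for every $A\in\L_\alpha$, and since $\L(A,F-)$ remains $\alpha$-accessible and preserves every weighted limit that $F$ does, it suffices to treat $\V$-functors $F\colon\K\to\V$. The second step is to use that an $\alpha$-accessible $F\colon\K\to\V$ is the left Kan extension of its restriction $FJ\colon\K_\alpha\to\V$ along $J\colon\K_\alpha\hookrightarrow\K$, so that $F$ factors through the $\V$-Yoneda embedding $Y\colon\K_\alpha\to[\K_\alpha^{op},\V]$ as the composite of $\K(J,1)$ (continuous, since it realizes $\K$ as the $\alpha$-continuous $\V$-functors on $\K_\alpha^{op}$) with $\tx{Lan}_Y(FJ)$.

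With this factorization in place, the argument reduces to showing that $\gamma$-continuity of $F$ forces $FJ$ to be a Cauchy weight, whence $\tx{Lan}_Y(FJ)$ is continuous and $F$ is continuous as a composite of continuous $\V$-functors. The crux is therefore an enriched analogue of Corollary~\ref{gammaflat-cauchy}: for a suitable $\gamma$ depending on $\K_\alpha$, every $\gamma$-flat weight on the $\V$-category $\K_\alpha^{op}$ is Cauchy. Here ``$\gamma$-flat'' should mean that $\tx{Lan}_YM$ is $\gamma$-continuous, i.e. preserves $\gamma$-small weighted limits, and $\gamma$-continuity of $F$ translates (via the factorization above, using that $\gamma$-small weighted limits of representables are $\gamma$-small weighted limits in $\K$) into $\gamma$-flatness of $FJ$.

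The main obstacle is precisely establishing this enriched Cauchy criterion, since the proof of Lemma~\ref{gamma-flat-small} used the category of elements and a counting argument on hom-\emph{sets} that does not transport verbatim to the $\V$-enriched setting. The cleanest route is to keep the formal argument of Lemma~\ref{flat+finite=cauchy}, which is already phrased in terms of free completions under $\alpha$-small (weighted) limits and weighted colimits and so enriches directly: if $M$ is an $\alpha$-small weight that is $\alpha$-flat, one forms the $\V$-limit $X$ of the composite into the free $\alpha$-small-limit completion $\Phi_\alpha^\dagger(\K_\alpha^{op})$, shows $\tx{Lan}_JM\cong\Phi_\alpha^\dagger(\K_\alpha^{op})(X,-)$ by the same weighted-colimit computation, and concludes that $M$ is representable hence Cauchy, with Lemma~\ref{flat-restriction} (whose enriched version is exactly \cite[Lemma~2.7]{LT21:articolo}) transferring Cauchyness back. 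What remains is the size bound: one must argue that for $\gamma$ large enough relative to the number of objects of $\K_\alpha$ and the $\V$-presentability degree of the hom-objects $\K_\alpha(X,Y)$, every $\gamma$-flat weight on $\K_\alpha^{op}$ is $\gamma$-small, so that the free-completion argument applies with $\alpha$ replaced by $\gamma$. I expect the honest work to be in pinning down the enriched size estimate replacing Lemma~\ref{gamma-flat-small} and verifying that ``$\gamma$-small weight'' coincides with ``$\gamma$-presentable object of $[\K_\alpha^{op},\V]$'' in the enriched sense; once that is granted, choosing $\gamma$ as in Remark~\ref{optimal} (now measuring the presentability of the hom-objects rather than their cardinality) completes the proof along the same lines as the $\bo{Set}$ case.
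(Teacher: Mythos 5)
Your overall scaffolding (reduction to $\L=\V$, the factorization $F\cong\tx{Lan}_Y(FJ)\circ\K(J,1)$, and the strategy ``$\gamma$-flat $\Rightarrow$ Cauchy'') does enrich without trouble, but the proposal has a genuine gap exactly at the point you defer to ``the honest work'': the enriched analogue of Lemma~\ref{gamma-flat-small}, i.e.\ that for suitable $\gamma$ every $\gamma$-flat weight $FJ\colon\K_\alpha\to\V$ is $\gamma$-small. The ordinary proof of that lemma is a counting argument through the category of elements, and this has no enriched counterpart: a $\V$-valued weight has no useful category of elements, $\gamma$-flat weights need not be $\gamma$-filtered colimits of representables in the enriched world (this failure is precisely the subject of \cite{LT21:articolo}), and there is no evident way to bound the presentability rank of the objects $(FJ)(C)\in\V$ by counting. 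A second, unflagged instance of the same problem occurs earlier in your argument: to pass from ``$\tx{Lan}_Y(FJ)$ preserves $\gamma$-small limits of representables'' to ``$FJ$ is $\gamma$-flat'' you need the implication $(2)\Rightarrow(1)$ of Proposition~\ref{flat-char}, whose proof in the ordinary case again goes through filteredness of the category of elements; an enriched version would require its own argument, which neither you nor the paper supplies. So what you describe as a verification to be pinned down is in fact the entire difficulty, and as written the proposal is not a proof.

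The paper sidesteps all of this with a short reduction that needs no enriched flatness theory at all. Take $\gamma$ as in Theorem~\ref{gamm-cont} for the underlying ordinary category $\K_0$, which is locally $\alpha$-presentable. If $F$ is $\gamma$-continuous in the weighted sense, then $F_0\colon\K_0\to\L_0$ is an $\alpha$-accessible, $\gamma$-continuous ordinary functor, hence continuous by Theorem~\ref{gamm-cont}; thus $F$ preserves all small conical limits. Moreover $F$ preserves powers by $\gamma$-presentable objects of $\V$ (these being $\gamma$-small weighted limits), and since every object of $\V$ is a conical $\gamma$-filtered colimit of $\gamma$-presentable ones, every power in $\K$ is a conical limit of powers by $\gamma$-presentables; hence $F$ preserves all powers. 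Since a $\V$-functor preserving conical limits and powers preserves all weighted limits, $F$ is continuous. Note also that the logical order in the paper is the reverse of yours: the enriched statement ``every $\gamma$-flat weight on a small $\V$-category is Cauchy'' (Corollary~\ref{V-gammaflat-cauchy}) is \emph{deduced from} Theorem~\ref{V-gamma-cont}, rather than used to prove it; only the enriched ``$\alpha$-small $+$ $\alpha$-flat $\Rightarrow$ Cauchy'' (Lemma~\ref{V-flat+finite=cauchy}) is proved directly, and it alone cannot run your argument without the missing size estimate.
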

\begin{proof}
	It's enough to consider $\gamma$ as in Theorem~\ref{gamm-cont} for $\K_0$, which is locally $\alpha$-presentable as an ordinary category. 
	
	Indeed, if $F$ is $\gamma$-continuous in the weighted sense, then $F_0\colon\K_0\to\L_0$ is $\gamma$ continuous as an ordinary functor, and hence continuous by Theorem~\ref{gamm-cont}. It follows then that $F$ preserves all conical limits and powers by $\gamma$-presentable objects. This is enough to ensure that $F$ is continuous since each object of $\V$ is a conical colimit of $\gamma$-presentable ones, thus each power in $\K$ is a conical limit of powers by $\gamma$-presentables, which are preserved by $F$.   
\end{proof}

The notion of $\alpha$-flat and $\alpha$-small $\V$-functor has been considered in the enriched setting again in \cite{Kel82:articolo}, where the ordinary $\alpha$-small limits were replaced by the $\alpha$-small weighted ones. So that $M\colon\C^{op}\to\V$ is called $\alpha$-flat if $\tx{Lan}_YM\colon[\C,\V]\to\V$ is $\alpha$-continuous; while it is called $\alpha$-small if $\C$ has less than $\alpha$ objects (up to isomorphism), $\C(C,D)\in\V_\alpha$ for any $C,D\in\C$, and $M$ lands in $\V_\alpha$.

Cauchy $\V$-functors have also been widely used and, like in the ordinary setting, can be characterized as those $\V$-functors whose left Kan extension along the Yoneda embedding is continuous, as well as those that are weights for absolute colimits (see for example \cite{KS05:articolo,Str1983:articolo}).

We are now ready to deduce the enriched analogue of Corollary~\ref{gammaflat-cauchy}.

\begin{cor}\label{V-gammaflat-cauchy}
	Let $\C$ be a small $\V$-category; then there exists $\gamma$ such that every $\gamma$-flat $\V$-functor $M\colon\C^{op}\to\V$ is Cauchy.
\end{cor}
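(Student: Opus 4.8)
The plan is to mirror the proof of Corollary~\ref{gammaflat-cauchy}, replacing ordinary $\gamma$-small limits everywhere by $\gamma$-small \emph{weighted} limits. Accordingly I would isolate two enriched analogues of the lemmas used there: first, that for $\gamma$ large enough every $\gamma$-flat $M$ is forced to be $\gamma$-small; and second, that every $\gamma$-small and $\gamma$-flat $\V$-functor is Cauchy. Concretely, since $\C$ is small and $\V$ is locally presentable, I would fix $\beta\geq\alpha_0$ such that $\C$ has fewer than $\beta$ objects and $\C(C,D)\in\V_\beta$ for all $C,D$, and then take $\gamma$ suitably larger than $\beta$ (a successor of $\beta$ will do). The two steps then combine exactly as in the $\bo{Set}$ case to give the corollary.

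For the second step (the enriched analogue of Lemma~\ref{flat+finite=cauchy}) I would argue as follows. Assume $M\colon\C^{op}\to\V$ is $\gamma$-small and $\gamma$-flat, and let $J\colon\C\to\Phi_\gamma^\dagger\C$ be the inclusion into the free completion of $\C$ under $\gamma$-small weighted limits. Because $M$ is a $\gamma$-small weight, the weighted limit $X:=\{M,J\}$ of $J$ weighted by $M$ exists in $\Phi_\gamma^\dagger\C$. I would then show that $\tx{Lan}_JM$ is isomorphic to the representable $\Phi_\gamma^\dagger\C(X,-)$: by the enriched Lemma~\ref{flat-restriction} the weight $\tx{Lan}_JM$ is again $\gamma$-flat, hence $\gamma$-continuous, and a representable is continuous, so it suffices to compare the two after restricting along $J$. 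Using that each $\Phi_\gamma^\dagger\C(-,JC)$ sends $\gamma$-small weighted limits to $\gamma$-small weighted colimits (the defining property of the free completion), the full faithfulness of $J$, and the co-Yoneda isomorphism, one obtains
\begin{align}
	\Phi_\gamma^\dagger\C(X,J-) &\cong \Phi_\gamma^\dagger\C(\{M,J\},J-) \nonumber \\
	&\cong \int^{C}\C(-,C)\otimes MC \nonumber \\
	&\cong M(-) \nonumber \\
	&\cong \tx{Lan}_JM(J-), \nonumber
\end{align}
exactly as in line (1) of Lemma~\ref{flat+finite=cauchy}. Hence $\tx{Lan}_JM$ is representable, so it is a Cauchy weight, and one further application of Lemma~\ref{flat-restriction} yields that $M$ is Cauchy.

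The first step (the enriched analogue of Lemma~\ref{gamma-flat-small}) is where the real difficulty lies, precisely because in the enriched setting there is no category of elements on which to run the counting argument. My plan is to recover the counting after testing against a strong generator. Since $\gamma\geq\alpha_0$, a $\gamma$-flat $M$ is a $\gamma$-filtered colimit of representables, and this colimit can be realised conically, so that for each $C\in\C$ one has $MC\cong\colim_i \C(C,c_i)$ with the indexing category $\gamma$-filtered. Fixing a $\gamma$-presentable object $g$ of $\V$, the functor $\V_0(g,-)$ preserves this colimit, whence $\V_0(g,MC)\cong\colim_i\V_0(g,\C(C,c_i))$; each term is $\gamma$-small because $g$ and $\C(C,c_i)$ lie in $\V_\beta$, and the ordinary counting argument of Lemma~\ref{gamma-flat-small}, applied to this conical $\gamma$-filtered colimit of $\gamma$-small sets, gives $\V_0(g,MC)\in\bo{Set}_\gamma$. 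Running this over a strong generator of $\gamma$-presentables forces $MC\in\V_\gamma$, so $M$ lands in $\V_\gamma$ and is $\gamma$-small.

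The main obstacle is therefore exactly this size estimate, namely transporting the elementwise counting of Lemma~\ref{gamma-flat-small} across the enrichment. The delicate point is justifying that a $\gamma$-flat weight admits a \emph{conical} $\gamma$-filtered presentation by representables, for it is this that makes $\V_0(g,-)$ commute with the colimit and lets the ordinary argument bite; this is precisely where the standing assumption $\gamma\geq\alpha_0$ is used. Once the two enriched lemmas are in place, the corollary follows by combining them, just as Corollary~\ref{gammaflat-cauchy} followed from Lemmas~\ref{gamma-flat-small} and~\ref{flat+finite=cauchy}.
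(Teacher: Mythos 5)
Your second step is essentially the paper's own Lemma~\ref{V-flat+finite=cauchy}, which is stated and proved there independently of the corollary; that part is fine. The genuine gap is in your first step. You assert that, since $\gamma\geq\alpha_0$, a $\gamma$-flat weight $M\colon\C^{op}\to\V$ admits a \emph{conical} $\gamma$-filtered presentation $MC\cong\colim_i\C(C,c_i)$ by representables. This is the enriched analogue of conditions (3) and (4) of Proposition~\ref{flat-char}, and those conditions are specific to $\bo{Set}$: in the enriched context flat weights need \emph{not} be conical filtered colimits of representables, for any choice of $\gamma$. This is precisely the delicate issue to which the cited paper \cite{LT21:articolo} is devoted, and it already fails for $\V=\bo{Ab}$. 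Indeed, let $\C$ be the one-object $\bo{Ab}$-category with hom-object $\mathbb{Z}$, so that weights $\C^{op}\to\V$ are abelian groups and the unique representable is $\mathbb{Z}$; the weight $M=\mathbb{Z}^2$ is dualizable, hence Cauchy, hence $\gamma$-flat for \emph{every} $\gamma$, yet in a conical filtered colimit of copies of $\mathbb{Z}$ every finitely generated subgroup is cyclic (any finite set of elements lifts to a single stage of the diagram), so $\mathbb{Z}^2$ is not such a colimit. Since your size estimate $\V_0(g,MC)\in\bo{Set}_\gamma$ is extracted entirely from this presentation, the first step collapses. (Even granting the presentation there are further problems: $\gamma=\beta^+$ need not bound the cardinalities of the hom-sets $\V_0(g,v)$ for $g,v\in\V_\beta$; a $\gamma$-filtered colimit of $\gamma$-small sets need not be $\gamma$-small, the counting in Lemma~\ref{gamma-flat-small} genuinely using that the diagram consists of representables over a category with bounded hom-sets; and $\gamma$-smallness of the sets $\V_0(g,MC)$, for $g$ ranging over a strong generator, does not by itself place $MC$ in $\V_\gamma$.)

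The idea you are missing is that the paper never proves an enriched analogue of Lemma~\ref{gamma-flat-small}; it arranges matters so that all counting happens in $\bo{Set}$. By definition $M$ is $\gamma$-flat if and only if $\tx{Lan}_YM\colon[\C,\V]\to\V$ is $\gamma$-continuous, and Cauchy if and only if $\tx{Lan}_YM$ is continuous; moreover $\tx{Lan}_YM$ is cocontinuous (being a left adjoint), hence $\alpha_0$-accessible. So the corollary follows in one line by applying Theorem~\ref{V-gamma-cont} to the locally $\alpha_0$-presentable $\V$-category $\K=[\C,\V]$ with $F=\tx{Lan}_YM$: for the $\gamma$ furnished there, $\gamma$-continuity of $\tx{Lan}_YM$ already implies continuity. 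Theorem~\ref{V-gamma-cont} is in turn proved by descending to the underlying ordinary category $\K_0$, where the ordinary Theorem~\ref{gamm-cont} (and with it the counting of Lemma~\ref{gamma-flat-small}, applied to the ordinary category of $\alpha_0$-presentable objects of $\K_0$) is available, plus the observation that ordinary continuity together with preservation of powers by $\gamma$-presentable objects of $\V$ yields weighted continuity. If you want to keep your two-lemma structure, you would need a completely different argument that $\gamma$-flat weights are $\gamma$-small; the route through conical presentations by representables cannot be repaired.
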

\begin{proof}
	The weight $M$ is $\gamma$-flat if and only if $\tx{Lan}_YM\colon[\C,\V]\to\V$ is $\gamma$-continuous, and is Cauchy if and only if $\tx{Lan}_Y M$ is continuous. Thus it's enough to take $\gamma$ as in Theorem~\ref{V-gamma-cont} for $\K=[\C,\V]$, $\alpha=\alpha_0$, and $F=\tx{Lan}_YM$ (which is cocontinuous, and hence $\alpha$-accessible).
\end{proof}

Note also that Lemma~\ref{flat+finite=cauchy} has an enriched version:

\begin{lema}\label{V-flat+finite=cauchy}
	Every $\alpha$-small and $\alpha$-flat weight is Cauchy.
\end{lema}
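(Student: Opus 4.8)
The plan is to mimic the ordinary proof of Lemma~\ref{flat+finite=cauchy} as closely as possible, replacing ordinary limits by weighted limits throughout. So suppose $M\colon\C^{op}\to\V$ is $\alpha$-small and $\alpha$-flat. First I would form the free completion $\Phi_\alpha^\dagger\C$ of $\C$ under $\alpha$-small weighted limits, with its inclusion $J\colon\C\to\Phi_\alpha^\dagger\C$. The key structural fact is that this completion exists as a small $\V$-category (since $\C$ is small and we close only under $\alpha$-small weights) and satisfies the universal property that each representable $\Phi_\alpha^\dagger\C(-,JC)\colon(\Phi_\alpha^\dagger\C)^{op}\to\V$ is $\alpha$-cocontinuous, i.e.\ sends $\alpha$-small weighted limits in $\Phi_\alpha^\dagger\C$ to the corresponding weighted colimits in $\V$.

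Next I would show that $\tx{Lan}_JM\colon\Phi_\alpha^\dagger\C\to\V$ is representable. Since $M$ is $\alpha$-small, it is an $\alpha$-small weighted colimit of representables; write $M\cong M*Y$ in the usual way, or more concretely exhibit $M$ as built from the representables $\C(-,C)$ by an $\alpha$-small weighted colimit. Dualizing, this gives an $\alpha$-small weighted \emph{limit} $X:=\{M,J\}$ in $\Phi_\alpha^\dagger\C$ (the weighted limit of $J$ weighted by $M$), which exists precisely because $\Phi_\alpha^\dagger\C$ is $\alpha$-small complete. I then claim $\tx{Lan}_JM\cong\Phi_\alpha^\dagger\C(X,-)$. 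As in the ordinary proof, because $M$ is $\alpha$-flat the extension $\tx{Lan}_JM$ is $\alpha$-flat (by the enriched Lemma~\ref{flat-restriction}, whose enriched form I am taking for granted) and hence $\alpha$-continuous, so it suffices to check the isomorphism after restricting along $J$. The restricted computation runs
\begin{equation*}
	\begin{split}
		\Phi_\alpha^\dagger\C(X,JC) &\cong \Phi_\alpha^\dagger\C(\{M,J\},JC)\\
		&\cong \{M,\Phi_\alpha^\dagger\C(J-,JC)\}\\
		&\cong \{M,\C(-,C)\}\\
		&\cong M(C)\\
		&\cong \tx{Lan}_JM(JC),
	\end{split}
\end{equation*}
where the second isomorphism uses that $\Phi_\alpha^\dagger\C(-,JC)$ turns the defining $\alpha$-small limit into the weighted colimit (i.e.\ the $\alpha$-cocontinuity of representables), the third uses full faithfulness of $J$, and the fourth is the defining $M\cong M*Y$ reduction (the coend/Yoneda computation expressing $M$ as the weighted colimit of representables).

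Having shown $\tx{Lan}_JM$ is representable, I conclude it is a Cauchy weight: the left Kan extension of a representable along Yoneda is an evaluation $\V$-functor, which is continuous. Finally, by the enriched Lemma~\ref{flat-restriction} applied in the Cauchy case (with $J$ fully faithful), $M$ itself is Cauchy. The main obstacle I expect is purely bookkeeping at the level of weighted (co)limits: one must be careful that the free $\alpha$-small-limit completion $\Phi_\alpha^\dagger\C$ is genuinely small and that its representables are $\alpha$-cocontinuous in the weighted sense, and that the duality between ``$\alpha$-small weighted colimit of representables'' (describing $M$) and ``$\alpha$-small weighted limit in the completion'' (defining $X$) is handled correctly. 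None of these steps is deep—each is a standard property of free completions under a class of weights—but the notational care needed to keep weights, variances, and the direction of $J$ straight is where the real work lies.
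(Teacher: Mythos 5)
This is essentially the paper's own proof: the same free completion $\Phi_\alpha^\dagger\C$ under $\alpha$-small weighted limits, the same object $X=\{M,J\}$ (which exists because $M$ is $\alpha$-small), the same two-step identification of $\tx{Lan}_JM$ with the representable $\Phi_\alpha^\dagger\C(X,-)$ ($\alpha$-continuity of both sides, then agreement after restriction to $\C$ via cocontinuity of the representables at objects $JC$, full faithfulness of $J$, and co-Yoneda), and the same final appeal to the restriction lemma of \cite{LT21:articolo} to transfer Cauchyness from $\tx{Lan}_JM$ back to $M$. One notational correction: the middle terms of your displayed chain should be the weighted \emph{colimits} $M\square*\Phi_\alpha^\dagger\C(J\square,JC)$ and $M\square*\C(\square,C)$ (which is what your prose justification actually describes), not the limits $\{M,-\}$ --- read literally, $\{M,\C(-,C)\}$ would be the hom-object $[\C^{op},\V](M,\C(-,C))$, which is not $M(C)$, so the brace notation would invalidate your Yoneda step.
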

\begin{proof}
	Let $M\colon \C\to\V$ be $\alpha$-small and $\alpha$-flat. Consider the free completion $\Phi_\alpha^\dagger\C$ of $\C$ under $\alpha$-small weighted limits, this comes together with the inclusion $J\colon \C\to\Phi_\alpha^\dagger\C$. Since $M$ is $\alpha$-small we can consider the limit $X\colon =\{M,J\}$ in $\Phi_\alpha^\dagger\C$. Then we prove that $\tx{Lan}_JM$ is isomorphic to the representable $\Phi_\alpha^\dagger\C(X,-)$: since $M$ is $\alpha$-flat the Kan extension $\tx{Lan}_JM$ is $\alpha$-flat too and hence $\alpha$-continuous; therefore it's enough to prove that $\tx{Lan}_JM$ and $\Phi_\alpha^\dagger\C(X,-)$ coincide when restricted to $\C$:
	\begin{equation*}
		\begin{split}
			\Phi_\alpha^\dagger\C(X,J-)&\cong \Phi_\alpha^\dagger\C(\{M,J\},J-)\\
			&\cong M\square*\Phi_\alpha^\dagger\C(J\square,J-)\\
			&\cong M\square*\C(\square,-)\\
			&\cong M(-).
		\end{split}
	\end{equation*}
	Since also $\tx{Lan}_JM$ restricts to $M$ we are done. It follows that $\tx{Lan}_JM$ is a Cauchy weight (since every representable functor is); by \cite[Lemma~2.7]{LT21:articolo} then $M$ is Cauchy too.
\end{proof}

\subsection{Some Applications}

\subsubsection{An(other) adjoint functor theorem}

As in the ordinary case we obtain an adjoint functor theorem specialized to the $\alpha$-accessible $\V$-functors. This is again a consequence of Theorem~\ref{V-gamma-cont} and of the fact that every continuous and accessible $\V$-functor between locally presentable $\V$-categories has a left adjoint \cite[Theorem~8.6]{BQR98}.

\begin{teo}\label{V-adjoint}
	Let $\K$ be a locally $\alpha$-presentable $\V$-category. There exists a regular cardinal $\gamma$ such that for any $\alpha$-accessible $\V$-functor $U\colon \K\to\L$, with $\L$ locally $\alpha$-presentable, the following are equivalent:\begin{enumerate}
		\item $U$ has a left adjoint;
		\item $U$ is $\gamma$-continuous.
	\end{enumerate}
\end{teo}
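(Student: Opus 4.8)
The plan is to mirror the structure of the ordinary adjoint functor theorem (Theorem~\ref{adjoint}), deducing both implications from the enriched continuity result just established. The key external input is the enriched general adjoint functor theorem \cite[Theorem~8.6]{BQR98}, which guarantees that any continuous and accessible $\V$-functor between locally presentable $\V$-categories has a left adjoint. First I would fix $\gamma$ exactly as in Theorem~\ref{V-gamma-cont} applied to $\K$.

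For the implication $(1)\Rightarrow(2)$: if $U$ has a left adjoint then $U$ is continuous (right adjoints preserve all weighted limits, in the enriched sense), and in particular it is $\gamma$-continuous. This direction is immediate and requires no choice of $\gamma$ at all.

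For the implication $(2)\Rightarrow(1)$, which is the substantive one: assume $U$ is $\alpha$-accessible and $\gamma$-continuous. By our choice of $\gamma$ and Theorem~\ref{V-gamma-cont}, $U$ is in fact continuous. Since $U$ is also $\alpha$-accessible (hence accessible), we may invoke \cite[Theorem~8.6]{BQR98} to conclude that $U$ has a left adjoint. The two implications together give the stated equivalence, and since $\gamma$ was taken from Theorem~\ref{V-gamma-cont}, it can again be described via the data of $\K_\alpha$ as in the ordinary case.

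I do not expect a genuine obstacle here, as the theorem is essentially a packaging of Theorem~\ref{V-gamma-cont} with the enriched Freyd-type theorem; the only point requiring mild care is confirming that ``continuous and accessible'' is precisely the hypothesis of \cite[Theorem~8.6]{BQR98} and that an $\alpha$-accessible $\V$-functor qualifies as accessible in their sense, together with the standard fact that right adjoint $\V$-functors preserve all weighted limits so that the easy implication goes through verbatim.
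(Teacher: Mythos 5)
Your proposal is correct and matches the paper's own argument exactly: the paper likewise obtains $(2)\Rightarrow(1)$ by combining Theorem~\ref{V-gamma-cont} with the enriched adjoint functor theorem of \cite[Theorem~8.6]{BQR98}, with the converse direction being the standard fact that right adjoints are continuous. There is nothing to add.
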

Note that $\gamma$ can be chosen again as in Remark~\ref{optimal} for $\K_0$.

\subsubsection{Dualizable objects}\label{dualizable}

Recall that an object $X\in\V$ is called dualizable if there exist $X^*\in\V$ and morphisms $\eta_X\colon I\to X\otimes X^*$ and $\epsilon_X\colon X^*\otimes X\to I$, called unit and counit respectively, satisfying the triangle equalities. Equivalently, since $\V$  is closed, $X$ is dualizable if and only if there exists $X^*\in\V$ such that $X\otimes - \cong [X^*,-]\colon \V_0\to\V_0$. By \cite[Section~6]{KS05:articolo}, this is equivalent to $X\otimes-$ being continuous. Then a direct application of Theorem~\ref{V-gamma-cont} to $F=M\otimes-$ gives:

\begin{prop}
	There exists a regular cardinal $\gamma$ such that an object $X\in\V$ is dualizable if and only if $X\otimes-\colon \V\to\V$ is $\gamma$-continuous.
\end{prop}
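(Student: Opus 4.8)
The plan is to read the statement through the characterization recalled immediately above it: since $\V$ is closed, an object $X$ is dualizable precisely when $X\otimes-\colon\V\to\V$ is continuous (this is \cite[Section~6]{KS05:articolo}). Granting this, the ``only if'' direction is immediate, because a continuous functor is \emph{a fortiori} $\gamma$-continuous for any $\gamma$. The entire content therefore lies in the ``if'' direction: to exhibit a single $\gamma$ for which $\gamma$-continuity of $X\otimes-$ already forces continuity. For this I would appeal directly to Theorem~\ref{V-gamma-cont}, exactly as the sentence preceding the statement suggests.

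Concretely, I would take $\K=\L=\V$, viewed as a locally $\alpha$-presentable $\V$-category with $\alpha=\alpha_0$ (this is the standing hypothesis on $\V$), and set $F=X\otimes-$. Two hypotheses of Theorem~\ref{V-gamma-cont} must be checked. First, $X\otimes-$ is a genuine $\V$-functor, by the monoidal closed structure of $\V$. Second, it is $\alpha$-accessible: being left adjoint to the internal hom $[X,-]$, the functor $X\otimes-$ is cocontinuous, and in particular preserves $\alpha$-filtered colimits. Theorem~\ref{V-gamma-cont} then supplies a regular cardinal $\gamma$, depending only on $\V$ and not on $X$, such that every $\alpha$-accessible and $\gamma$-continuous $\V$-endofunctor of $\V$ is continuous.

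With this $\gamma$ fixed, the equivalence follows by chaining the cited facts: if $X\otimes-$ is $\gamma$-continuous then it is continuous by Theorem~\ref{V-gamma-cont}, hence $X$ is dualizable by \cite{KS05:articolo}; conversely a dualizable $X$ yields a continuous, and so $\gamma$-continuous, $X\otimes-$. I do not expect a genuine obstacle, since the substance is carried entirely by Theorem~\ref{V-gamma-cont} together with the known equivalence between dualizability and continuity. The one point that warrants care is bookkeeping about the enrichment: one must ensure that the ``continuity'' appearing in \cite{KS05:articolo} is $\V$-enriched continuity, that is, preservation of all weighted limits, so that it matches exactly the conclusion of Theorem~\ref{V-gamma-cont}. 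As $X\otimes-$ and $[X,-]$ form a $\V$-adjunction this is automatic, but it is worth recording explicitly so that the reduction to the enriched theorem is clean.
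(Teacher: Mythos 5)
Your proposal is correct and follows exactly the paper's route: the paper derives the proposition as ``a direct application of Theorem~\ref{V-gamma-cont} to $F=X\otimes-$'', using the equivalence from \cite[Section~6]{KS05:articolo} between dualizability of $X$ and continuity of $X\otimes-$, with the $\alpha$-accessibility of $X\otimes-$ coming from its cocontinuity as a left adjoint, just as you argue. Your added remark that $\gamma$ depends only on $\V$ and that the continuity in \cite{KS05:articolo} is enriched continuity is a harmless (and accurate) elaboration of the same argument.
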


The following is an application of Lemma~\ref{V-flat+finite=cauchy}:

\begin{prop}
	Let $\V$ be locally $\alpha$-presentable as a closed category. An object $X\in\V$ is dualizable if and only if:\begin{enumerate}
		\item $X$ is $\alpha$-presentable;
		\item $X$ is $\alpha$-flat, or equivalently $X\otimes -$ is $\alpha$-continuous.
	\end{enumerate}
\end{prop}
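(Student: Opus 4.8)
The plan is to identify the object $X$ with the weight $X\colon\I\to\V$ on the unit $\V$-category $\I$ (for which $\I^{op}=\I$), and then to read the two conditions off the enriched flat/Cauchy dictionary assembled above. The crucial computation is that the left Kan extension of $X$ along the Yoneda embedding $Y\colon\I\to[\I^{op},\V]\simeq\V$ is $\tx{Lan}_Y X\cong X\otimes-$: under the equivalence $[\I^{op},\V]\simeq\V$ the functor $Y$ picks out the unit $I$, and the coend formula gives $\tx{Lan}_Y X(V)\cong [I,V]\otimes X\cong V\otimes X$ since $[I,V]\cong V$ in a closed category. With this identification in hand, ``$X$ is $\alpha$-flat'' means precisely that $X\otimes-$ is $\alpha$-continuous (this is the ``or equivalently'' in~(2)), while ``$X$ is Cauchy'' means that $X\otimes-$ is continuous, which by \cite[Section~6]{KS05:articolo} is exactly the dualizability of $X$. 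Finally, since $\V$ is locally $\alpha$-presentable as a closed category and $\alpha\ge\alpha_0$, we have $I\in\V_\alpha$; hence the weight $X$ is $\alpha$-small if and only if it lands in $\V_\alpha$, that is, if and only if $X$ is $\alpha$-presentable, which is condition~(1).

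For the backward direction I would assume (1) and (2). Then, viewed as a weight on $\I$, the object $X$ is simultaneously $\alpha$-small (by~(1) together with $I\in\V_\alpha$) and $\alpha$-flat (by~(2)). Lemma~\ref{V-flat+finite=cauchy} then gives that $X$ is Cauchy, so $X\otimes-\cong\tx{Lan}_Y X$ is continuous and therefore $X$ is dualizable. For the forward direction I would assume $X$ dualizable, so $X\otimes-$ is continuous and $X$ is Cauchy; in particular $X$ is $\alpha$-flat, which is~(2). To obtain~(1), note that since $X$ is dualizable so is $X^*$ (with dual $X$), whence $[X,-]\cong X^*\otimes-$; then $\V_0(X,-)\cong\V_0(I,[X,-])\cong\V_0(I,X^*\otimes-)$, and this preserves $\alpha$-filtered colimits because $X^*\otimes-$ is cocontinuous and $I$ is $\alpha$-presentable, so $X$ is $\alpha$-presentable.

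The only substantive content is the weight/object correspondence and the identification $\tx{Lan}_Y X\cong X\otimes-$; once these are fixed, the backward implication is an immediate application of Lemma~\ref{V-flat+finite=cauchy}, and the remaining work is the routine hom-adjunction manipulation in the forward direction. I expect the main point requiring care — rather than a genuine obstacle — to be the forward direction's observation that a dualizable object is automatically $\alpha$-presentable, which is exactly where the standing hypothesis $I\in\V_\alpha$ enters.
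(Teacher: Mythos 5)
Your proposal is correct and is exactly the argument the paper intends: the paper gives no written proof beyond the remark that the proposition is ``an application of Lemma~\ref{V-flat+finite=cauchy}'', and your identification of $X$ with a weight on the unit $\V$-category (so that $\alpha$-small $=$ $\alpha$-presentable, $\alpha$-flat $=$ $\alpha$-continuity of $X\otimes-$, and Cauchy $=$ dualizable via \cite[Section~6]{KS05:articolo}) is precisely that application. The forward direction you supply, using $[X,-]\cong X^*\otimes-$ and $I\in\V_\alpha$ to get $\alpha$-presentability, is the routine half and is also fine.
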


\subsubsection{Small accessible $\V$-categories}

As an application of the main Theorem we can prove a generalization of \cite[Proposition~2.6]{AR94:libro} which shows that a small ordinary category is accessible if and only if it has splittings of idempotents, or, equivalently, if it is Cauchy complete.

In the enriched context there are two main notions of accessibility; here we consider the one introduced in \cite{BQ96:articolo,BQR98}, where a $\V$-category is called accessible if it is freely generated by a small $\V$-category under $\alpha$-flat weighted colimits, for some $\alpha$. Equivalently, $\A$ is accessible if and only if $\A\simeq\alpha\tx{-Flat}(\C^{op},\V)$ for some small $\V$-category $\C$ and a regular cardinal $\alpha$ \cite[Theorem~5.3]{BQR98}, where $\alpha\tx{-Flat}(\C^{op},\V)$ is the full subcategory of $[\C^{op},\V]$ spanned by the $\alpha$-flat $\V$-functors.

On the other hand, recall that a $\V$-category is called Cauchy complete if it has all colimits weighted by Cauchy $\V$-functors; this is equivalent to saying that every Cauchy $M\colon\C^{op}\to\V$ is representable. We are now ready to prove:

\begin{teo}\label{smallacc}
	A small $\V$-category is accessible if and only if it is Cauchy complete.
\end{teo}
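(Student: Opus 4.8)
The plan is to prove the two implications separately. The forward direction (accessible $\Rightarrow$ Cauchy complete) should follow almost formally from the definitions, while the reverse direction (Cauchy complete $\Rightarrow$ accessible) will rely on Corollary~\ref{V-gammaflat-cauchy} as its main engine.

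For the forward direction I would start from the characterization $\A\simeq\alpha\tx{-Flat}(\C^{op},\V)$ for some small $\C$ and regular $\alpha$, given by \cite[Theorem~5.3]{BQR98}. The key point is that $\alpha\tx{-Flat}(\C^{op},\V)$ is closed in the cocomplete category $[\C^{op},\V]$ under $\alpha$-flat weighted colimits---this is exactly what it means for it to be the free cocompletion of the representables under such colimits. Since a weight is Cauchy precisely when its left Kan extension along Yoneda is continuous, every Cauchy weight is in particular $\alpha$-continuous after Kan extension, hence $\alpha$-flat. I would then conclude that $\alpha\tx{-Flat}(\C^{op},\V)$ is closed under Cauchy-weighted colimits, so it has all of them and is Cauchy complete; as Cauchy completeness is invariant under equivalence, $\A$ is too.

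For the reverse direction I would apply Corollary~\ref{V-gammaflat-cauchy} to $\C=\A$ (legitimate since $\A$ is small), obtaining a regular cardinal $\gamma$ such that every $\gamma$-flat $M\colon\A^{op}\to\V$ is Cauchy, and hence representable by the assumed Cauchy completeness. Conversely every representable weight is Cauchy and therefore $\gamma$-flat, so the $\gamma$-flat weights on $\A^{op}$ are exactly the representables. This identifies the Yoneda embedding with an equivalence $\A\simeq\gamma\tx{-Flat}(\A^{op},\V)$, and \cite[Theorem~5.3]{BQR98} (with $\C=\A$ and $\alpha=\gamma$) then yields accessibility.

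The hard part will be the reverse direction, and within it the identification $\gamma\tx{-Flat}(\A^{op},\V)\simeq\A$: I must verify both that every $\gamma$-flat weight is representable (from Corollary~\ref{V-gammaflat-cauchy} together with Cauchy completeness) and that every representable is $\gamma$-flat, so that the image of Yoneda coincides exactly with the $\gamma$-flat subcategory. Once this is established the accessibility characterization applies immediately; the forward direction by comparison is essentially bookkeeping, resting only on the observation that Cauchy weights are $\alpha$-flat for every $\alpha$.
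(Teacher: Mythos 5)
Your proof is correct and takes essentially the same route as the paper: the substantive direction is identical (apply Corollary~\ref{V-gammaflat-cauchy} to the small Cauchy complete $\A$ to get $\gamma$, conclude that the $\gamma$-flat weights on $\A^{op}$ are exactly the representables, and invoke \cite[Theorem~5.3]{BQR98} to deduce $\A\simeq\gamma\tx{-Flat}(\A^{op},\V)$ is accessible). The only difference is cosmetic: for the forward direction the paper simply cites \cite[Corollary~5.5]{BQR98}, whereas you re-derive that fact from the closure of $\alpha\tx{-Flat}(\C^{op},\V)$ under $\alpha$-flat colimits together with the observation that Cauchy weights are $\alpha$-flat for every $\alpha$; both are valid.
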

\begin{proof}
	Every accessible $\V$-category is Cauchy complete by \cite[Corollary~5.5]{BQR98}. For the opposite direction consider any Cauchy complete and small $\V$-category $\C$. By Corollary~\ref{V-gammaflat-cauchy} we can find $\gamma$ such that every $\gamma$-flat $\V$-functor $M\colon \C^{op}\to\V$ is Cauchy. Since $\C$ is Cauchy complete, this means that every $\gamma$-flat weight out of $\C^{op}$ is representable; therefore $\C\simeq\gamma\tx{-Flat}(\C^{op},\V)$ is accessible.
\end{proof}

\vspace{20pt}

\end{document}